\newtheorem{theoremcounter}{Theorem Counter}[section]
\theoremstyle{definition}
\newtheorem{remark}[theoremcounter]{Remark}
\newtheorem{example}[theoremcounter]{Example}
\theoremstyle{plain}
\newtheorem{lemma}[theoremcounter]{Lemma}
\newtheorem{proposition}[theoremcounter]{Proposition}
\newtheorem{corollary}[theoremcounter]{Corollary}
\newtheorem{theorem}[theoremcounter]{Theorem}
\numberwithin{equation}{section}
\newcommand{\N}{\mathbb{Z}_{\ge1}}
\newcommand{\Z}{\mathbb{Z}}
\newcommand{\Q}{\mathbb{Q}}
\newcommand{\R}{\mathbb{R}}
\newcommand{\C}{\mathbb{C}}
\DeclareMathOperator{\ReNew}{Re}
\renewcommand{\Re}{\ReNew}
\begin{document}
\author{Takashi Miyagawa}
\address[Takashi Miyagawa]{Onomichi City University,  1600-2 Hisayamada-cho, Onomichi, Hiroshima, 722-8506, Japan} 
\email{miyagawa@onomichi-u.ac.jp}

%\author{Hideki Murahara}
%\address[Hideki Murahara]{The University of Kitakyushu,  4-2-1 Kitagata, Kokuraminami-ku, Kitakyushu, Fukuoka, 802-8577, Japan}
%\email{hmurahara@mathformula.page}

%%%%%%%%%%%%%%%%%%%%%%%%%%%%%%%%%%%%%%%%%%%%%%%%%%%%%%%%%%%%
\subjclass[2020]{Primary 11M32, Secondary 11B35}

%%%%%%%%%%%%%%%%%%%%%%%%%%%%%%%%%%%%%%%%%%%%%%%%%%%%%%%%%%%%
\begin{abstract}
The functional equations of the Riemann zeta function, the Hurwitz zeta function, and the Lerch zeta function have been well known for a long time, and there is great importance in studying these zeta functions. For example, fundamental properties such as the upper bounds, the distribution of zeros, and the zero-free regions in the Riemann zeta function derive from functional equations.
		
In this paper, we consider the functional equations for the Barnes double zeta-function  
$ \zeta_2 (s, \alpha ; v, w ) = \sum_{m=0}^\infty \sum_{n=0}^\infty (\alpha+vm+wn)^{-s} $.
Additionally, by applying this functional equation and the Phragm\'en-Lindel\"of convexity principle, we obtain some upper bounds for $ \zeta_2(\sigma + it, \alpha ; v, w) $ with respect to $ t $ as $ t \rightarrow \infty $.
\end{abstract}

%%%%%%%%%%%%%%%%%%%%%%%%%%%%%%%%%%%%%%%%%%%%%%%%%%%%%%%%%%%%
\keywords{Barnes double zeta-function, Functional equation, Convexity principle,
		Lindel\"of hypothesis}

%%%%%%%%%%%%%%%%%%%%%%%%%%%%%%%%%%%%%%%%%%%%%%%%%%%%%%
\title{Functional equation, upper bounds and analogue of Lindel\"of hypothesis for the Barnes double zeta function}

\maketitle

%%%%%%%%%%%%%%%%%%%%%%%%%%%%%%%%%%%%%%%%%%%%%%%%%%%%%%

	%%%%%%%% 1.Introduction and the statement of results %%%%%%%%%%%%%%%%%%%%%
	%%%%%%%%%%%%%%%%%%%%%%%%%%%%%%%%%%%%%%%%%%%%%%%%%%%%%%%%%%%%%%%%%%%%%%%%%%%%
	%%%%%%%%%%%%%%%%%%%%%%%%%%%%%%%%%%%%%%%%%%%%%%%%%%%%%%%%%%%%%%%%%%%%%%%%%%%%
	%%%%%%%%%%%%%%%%%%%%%%%%%%%%%%%%%%%%%%%%%%%%%%%%%%%%%%%%%%%%%%%%%%%%%%%%%%%%
	%%%%%%%%%%%%%%%%%%%%%%%%%%%%%%%%%%%%%%%%%%%%%%%%%%%%%%%%%%%%%%%%%%%%%%%%%%%%
	\section{Introduction}
	In this section, we introduce the Barnes double zeta-function, the functional equations 
	of the Hurwitz zeta-function and the Lerch zeta-function, and some upper bounds for the Riemann
	zeta-function.
	
	Let $ s = \sigma + it $ be a complex variable.
	For $ \theta \in \R $ let 
	$ H(\theta) = \{ z = re^{i(\theta + \phi)} \in \C \, | \, r>0, -\pi/2 < \phi < \pi/2 \} $ be the open half plane whose normal vector is $ e^{i \theta}$.
	The Barnes double zeta-function is defined by
	\begin{equation}\label{B_2-zeta}
		\zeta_2 (s, \alpha ; v, w)
		= \sum_{m= 0}^\infty \sum_{n= 0}^\infty
		\frac{1}{(\alpha + v m + wn)^s}
	\end{equation}
	where $ \alpha, v, w \in H(\theta) $ for some $ \theta$. It is known 
	that this series converges absolutely and uniformly on any compact subset
	in $ \{s \in \C \ | \ \mathrm{Re}(s)>2 \} $.
	This function was introduced by E. W. Barnes \cite{Barnes1899}
	in the theory of double gamma function, and double series of the 
	form (\ref{B_2-zeta}) is introduced in \cite{Barnes1901}. Furthermore in \cite{Barnes1904}, 
	in connection with the theory of the multiple gamma function, and 
	$r$-multiple series of the form (\ref{B-zeta}) was introduced,
	which we will mention in Remark \ref{rem:zeta_r} in the next section.
	
	The above series (\ref{B_2-zeta}) is a double-series version of the Hurwitz zeta-function
	\begin{equation}\label{zeta_H}
		\zeta_H(s, \alpha) = \sum_{n=0}^\infty \frac{1}{(n+\alpha)^s}
		\qquad (0 < \alpha \leq 1).
	\end{equation}
	Additionally, as a generalization of this series in different direction, the Lerch zeta-function
	\begin{equation}\label{zeta_L}
		\zeta_L(s, \alpha, \lambda) = \sum_{n=0}^\infty \frac{e^{2\pi in \lambda}}{(n+\alpha)^s}
		\qquad (0 < \alpha \leq 1, \ 0 < \lambda \leq 1)
	\end{equation}
	is also an important topic of research. These series are absolutely convergent for $ \sigma > 1 $.
	Moreover, if $ 0 < \lambda < 1 $, then the series (\ref{zeta_L}) is convergent even for $ \sigma > 0 $.
	Furthermore, series (\ref{zeta_H}) and (\ref{zeta_L}) satisfy the following functional equations 
	\begin{equation}\label{eq:func-eq_zeta_H}
		\zeta_H(s,\alpha)
		=\frac{\Gamma(1-s)}{(2\pi)^{1-s}}
		\left\{ e^{\pi i(1-s)/2} \sum_{n=1}^\infty \frac{e^{2\pi in(1-\alpha)}}{n^{1-s}}
		+ e^{-\pi i(1-s)/2} \sum_{n=1}^\infty \frac{e^{2\pi in\alpha}}{n^{1-s}} \right\}
	\end{equation}
	as $ \sigma < 0 $, and 
	\begin{eqnarray}
		&& \zeta_L(s, \alpha, \lambda) = \frac{\Gamma(1-s)}{(2\pi)^{1-s}}
		\left\{ e^{\{(1-s)/2 - 2 \alpha \lambda\}\pi i} 
		\sum_{n=0}^\infty \frac{e^{2\pi in(1-\alpha)}}{(n+\lambda)^{1-s}} \right.	\nonumber \\
		&& \qquad \qquad \qquad \qquad \qquad \qquad \quad
		\left.	+e^{\{-(1-s)/2 + 2 \alpha (1-\lambda)\}\pi i } 
		\sum_{n=0}^\infty \frac{e^{2\pi in \alpha}}{(n+1-\lambda)^{1-s}}
		\right\}	\label{eq:func-eq_zeta_L}
	\end{eqnarray}
	with $ \sigma < 0, 0 < \lambda < 1 $, respectively, which are well known.
	\begin{remark}
		Furthermore, using the Lerch zeta-function $ \zeta_L(s,\alpha, \lambda) $ two functional equations (\ref{eq:func-eq_zeta_H}) and (\ref{eq:func-eq_zeta_L})
		can be expressed as follows
		\begin{equation*}%\label{func-eq_zeta_H}
			\zeta_H(s,\alpha)
			=\frac{\Gamma(1-s)}{(2\pi)^{1-s}}
			\left\{ e^{\pi i(1-s)/2} \zeta_L(1-s, 1, 1-\alpha)
			+ e^{-\pi i(1-s)/2} \zeta_L(1-s, 1, \alpha) \right\},
		\end{equation*}
		\begin{eqnarray*}
			&& \zeta_L(s, \alpha, \lambda) = \frac{\Gamma(1-s)}{(2\pi)^{1-s}}
			\left\{ e^{\{(1-s)/2 - 2 \alpha \lambda\}\pi i} 
			\zeta_L(1-s, \lambda, 1-\alpha) \right.	\nonumber \\
			&& \qquad \qquad \qquad \qquad \qquad \qquad \quad
			\left.	+e^{\{-(1-s)/2 + 2 \alpha (1-\lambda)\}\pi i } 
			\zeta_L(1-s, 1-\lambda, \alpha)
			\right\}.	%\label{eq:func-eq_zeta_H}
		\end{eqnarray*}
		Also, since it is well known that $ \zeta_L(s, \alpha, \lambda) $ is analytically continued to a meromorphic function for the whole complex plane, the above two equations holds for the whole complex $s$-plane.
	\end{remark}
	
	Next, we will introduce some results of the most fundamental order evaluations for the Riemann zeta-function
	\[
	\zeta(s) = \sum_{n=1}^\infty \frac{1}{n^s}.
	\]
	These evaluations concern the order of $ \zeta(\sigma+it) $ 
	as the imaginary part $t$ tends to infinity.
	In particular, research on the order evaluation for  
	$ \zeta(1/2 + it) $ as $ t \rightarrow \infty $ is particularly prominent.
	
	As a classical asymptotic formula,
	\[
	\zeta(\sigma + it) \ll |t|^{(1-\sigma)/2 +\varepsilon} 
	\quad (0 \leq \sigma \leq 1, |t| \geq 2)
	\]
	is known, especially when $ \sigma=1/2 $ is
	\[
	\zeta(1/2+it) \ll t^{1/4+\varepsilon}.
	\]
	This result is obtained using the functional equation of $ \zeta(s) $ and the Phragm\'en-Lindel\"of convexity principle. 
	Another classical asymptotic formula for $ \zeta(s) $ was proved by Hardy and Littlewood using the following formula;
	\begin{equation}\label{order_of_zeta}
		\zeta(s) = \sum_{n \leq x} \frac{1}{n^s} - \frac{x^{1-s}}{1-s} 
		+ O(x^{-\sigma}) \qquad (x \rightarrow \infty),
	\end{equation}
	uniformly for $ \sigma \geq \sigma_0 > 0,\ |t| < 2\pi x/C $, when $ C > 1 $ 
	is a constant. This formula gives an indication in the discussion in the
	critical strip of $ \zeta(s) $.
	Also, Hardy and Littlewood proved the following asymptotic formula 
	({\S}4 in \cite{Titchmarsh1986}); suppose that 
	$ 0 \leq \sigma \leq 1,\ x \geq 1,\ y\geq 1 $ and $ 2\pi xy = |t| $ then
	\begin{equation}\label{AFE}
		\zeta(s) = \sum_{n \leq x} \frac{1}{n^s} + \chi(s) \sum_{n \leq y} \frac{1}{n^{1-s}}
		+ O(x^{-\sigma}) + O(|t|^{1/2-\sigma}y^{\sigma-1}),
	\end{equation}
	where $ \chi(s) = 2 \Gamma(1-s) \sin{(\pi s/2)}(2\pi)^{s-1} $ and note that the functional equation
	$ \zeta(s) = \chi(s) \zeta(1-s) $ holds. This formula (\ref{AFE}) is called the
	approximate functional equation.
	%Author prove an analogue of the approximate functional equation (\ref{AFE}) 
	Hardy and Littlewood improved this to
	\begin{equation}\label{Hardy-Littlewood}
		\zeta \left( \frac12 + it \right) \ll t^{1/6 + \varepsilon}
	\end{equation}
	by the van der Corput method, applying to (\ref{AFE}).
	In 1988, Bombieri and Iwaniec established the bound $ \zeta(1/2 + it) \ll t^{9/56 + \varepsilon} $. Since then, many mathematicians have refined this result, with Huxley  proving $ \zeta(1/2 + it) \ll t^{32/205 + \varepsilon} $ in 2005.
	Furthermore in 2017, $ \zeta(1/2 + it) \ll t^{13/84 + \varepsilon} $ was proved by Bourgain (see \cite{Bourgain2017}).
	
	Improvements in order estimation of $ \zeta(1/2+it) $ are still ongoing, and the true order is expected to be
	\begin{equation}\label{Lindelof_hypothesis}
		\zeta \left( \frac12 + it \right) \ll t^{\varepsilon}
	\end{equation}
	for any $ \varepsilon > 0 $. This conjecture is called the Lindel\"of hypothesis.
	Furthermore, it is well known that (\ref{Lindelof_hypothesis}) holds is equivalent to
	\[
	\int_2^T |\zeta(\sigma + it)|^{2k} dt = O(T^{1+\varepsilon}) 
	\qquad \left( \frac12 \leq \sigma \leq 1 \right)
	\]
	holds for any $ k \in \N $ and any $ \varepsilon > 0 $.

As an analogue of the classical formula (\ref{order_of_zeta}), the following result was obtained in \cite{Miyagawa2018}.
    
	\medskip
	%================== prop1 =========================================
	
	\begin{proposition}[Theorem 3 in \cite{Miyagawa2018}]\label{order_of_Barnes_zeta}
		Let $ 0 < \sigma_1 < \sigma_2 < 2,\ x \geq 1 $ and $ C > 1 $.
		Suppose $ s = \sigma + it \in \C $ with $ \sigma_1 < \sigma < \sigma_2 $
		and $ |t| \leq 2\pi x/C $. Then
		\begin{eqnarray}
			&& \zeta_2(s,\alpha;v,w) = \mathop{\sum\sum}\limits_{0 \leq m,n \leq x} 
			\frac{1}{(\alpha + vm + wn)^s}		\nonumber \\
			&& \qquad\qquad	+ \frac{(\alpha + vx)^{2-s} + (\alpha + wx)^{2-s} - (\alpha + vx + wx)^{2-s}}
			{vw(s-1)(s-2)}  + O(x^{1-\sigma} )	  
		\end{eqnarray}
		as $ x \rightarrow \infty $. 
	\end{proposition}
	%====================================================================

    The purpose of this paper is to establish functional equations for the Barnes double zeta-function $\zeta_2(s,\alpha;v,w)$ under general assumptions on the parameters $v$ and $w$, including both linearly independent and dependent cases over $\mathbb{Q}$. We also derive upper bounds for $\zeta_2(\sigma+it,\alpha;v,w)$ by applying the Phragm\'en--Lindel\"of convexity principle, and discuss an analogue of the Lindel\"of hypothesis in this setting.
	
	\section{Statements of Main Results}
	In this section, we state our main results on the functional equations and upper bounds for the Barnes double zeta-function.     
        In particular, in Theorem \ref{th:Main_Theorem1}, the results differ depending on whether the complex parameters 
        $v,w$ are linearly independent or linearly dependent over $ \Q $.
	%In particular, in Theorem \ref{th:Main_Theorem1} the results when the complex parameter $ v,w $ linearly independent are different from the results when $ v, w $ are linearly dependent over $ \Q $.
	%\textbf{(and gave the another proof of the mean square value of $ \zeta_2(3/2 + it, \alpha; v,w) $ 
	%with respect to $ t $ (in Theorem \ref{th:Main_Theorem2}).)}
	Additionally, we obtain upper bounds for $ \zeta_2(\sigma + it,\alpha;v,w) \ (0 \leq \sigma \leq 2) $ with respect to $ t \rightarrow \infty $, and discuss an 
	analogue of the Lindel\"of hypothesis for the Barnes double zeta-function.
	If $ \alpha, v, w \in H(\theta) $, then $ \alpha + vm + wn \in H(\theta) $.
	Here, let $ \alpha = v(1-y_1) + w(1-y_2) $ and consider only the case where $ 0 \leq y_1, y_2 < 1 $. 
	
	%================= Theorem2 ====================================================
	%===============================================================================
	%===============================================================================
	\begin{theorem}\label{th:Main_Theorem1}
    Suppose that $\alpha, v, w \in H(\theta)$, and write $\alpha = v(1-y_1) + w(1-y_2)$ with $0 \le y_1, y_2 < 1$.
	We have the following;
	\begin{enumerate}
	\item[$(i)$]
	If $v/w$ is either non-real or a real algebraic irrational number, we have 
	\begin{align}
    &\zeta_2(s, \alpha; v,w)		\nonumber \\
	&=- \frac{\Gamma(1-s)}{(2\pi i)^{1-s}e^{\pi is}}\left\{ \frac{1}{v^s} 
	\mathop{\sum_{n=-\infty}}\limits_{n \neq 0}^\infty \frac{e^{2\pi in(vy_1+wy_2)/v}}
	{(e^{2\pi inw/v}-1)n^{1-s}} + \frac{1}{w^s}
	\mathop{\sum_{n=-\infty}}\limits_{n \neq 0}^\infty \frac{e^{2\pi in(vy_1+wy_2)/w}}
	{(e^{2\pi inv/w}-1)n^{1-s}} \right\}, \nonumber \\
	\label{Thm1(i)}
	\end{align}
	where $ -\theta < \arg{(2\pi in/v)}, \arg{(2\pi in/w)} < -\theta + \pi $. 
    The two series on the right-hand side converge absolutely and locally uniformly in $s \in \mathbb{C}$ when $v/w$ is non-real and $(y_1,y_2)\neq(0,0)$.
    In the case where $v/w$ is a real algebraic irrational number, they converge absolutely and locally uniformly in the half-plane $\Re(s)<0$ when $y_1=0$ or $y_2=0$.
	\item[$(ii)$]
	If $ v/w $ is a rational number, i.e. $ v, w $ are linearly dependent over $ \mathbb{Q} $,
			there exist $ p, q \in \mathbb{N} $ such that $ pv=qw $ and $ (p,\;q)=1 $. Then we have
			\begin{align}
				&\zeta_2(s, \alpha; v,w)		\nonumber \\
				&=- \frac{\Gamma(1-s)}{(2\pi i)^{1-s}e^{\pi is}}
				\left\{ \frac{1}{v^s} 
				\mathop{\sum_{n=-\infty}}\limits_{q \; \mid \hspace{-.25em} / \, n}^\infty
				\frac{e^{2\pi in(vy_1+wy_2)/v}}
				{(e^{2\pi inw/v}-1)n^{1-s}} + \frac{1}{w^s}
				\mathop{\sum_{n=-\infty}}\limits_{p \; \mid \hspace{-.25em}/ \, n}^\infty
				\frac{e^{2\pi in(vy_1+wy_2)/w}}
				{(e^{2\pi inv/w}-1)n^{1-s}} \right. \nonumber \\
				& \quad 
				+ \frac{q^{s-1}}{2\pi ipv^s}(s-1) 
				\left( \zeta_L\left(2-s,1,-\frac{q\alpha}v\right)
				+ e^{\pi is}\zeta_L\left(2-s,1,\frac{q\alpha}v \right)
				\right)
				\nonumber \\
				& \quad
				\left.
				- \left( \frac{\alpha q}{pv^2} - \frac{p+q}{pv} + \frac{vp}{2q} + \frac{v}2 \right) 
				\mathop{\mathop{\left( \frac{q}{v} \right)^{s-1}_{}}\limits_{}}
				\limits_{}
				\left( \zeta_L \left(1-s,1,-\frac{q\alpha}v\right) 
				- e^{\pi is}  \zeta_L \left(1-s,1, \frac{q\alpha}v \right) \right)
				\right\}, \nonumber \\ \label{func_eq_dep}
			\end{align}
			where the branch of $ \arg{(2\pi in/v)}, \arg{(2\pi in/w)} $ are the same as in $(i)$, and convergence of the finite series on the first and second terms on the right-hand side are
			the same as in $(i)$.
		\end{enumerate}
		%\begin{equation}
		%R_n = \lim_{z \rightarrow 2q\pi in/v} \frac{d}{dz} 
		%\left\{ \left( z -\frac{2q\pi in}{v} \right)^2 
		%\frac{z^{s-1}e^{-(\alpha + vN+pvN/q)z}}{(e^{vz}-1)(e^{pvz/q}-1)} \right\}.
		%\label{R_n}
		%\end{equation}
	\end{theorem}
	%================= Theorem2 ====================================================
	%===============================================================================

\bigskip

\begin{remark}
The assumption on $v/w$ is imposed in order to avoid small divisor
phenomena in the factor
\[
 e^{2\pi i n w/v}-1 .
\]
If $v/w$ is non-real, this denominator can be controlled by the
exponential growth or decay of $e^{2\pi i n w/v}$. 
On the other hand, when $v/w$ is real and irrational, the denominator
may become arbitrarily small. In the present paper we assume that
$v/w$ is real algebraic irrational in this case; then Roth's theorem
gives a sufficient Diophantine lower bound, which ensures the required
convergence. For real transcendental values of $v/w$, such a uniform
Diophantine estimate is not available in general, and therefore this
case is not treated here.
\end{remark}

    \bigskip
	\begin{remark}
		The above functional equation (\ref{func_eq_dep}) is a generalization 
		of the well-known equation 
		\begin{equation}
		\zeta_2(s, \alpha; 1,1) = (1-\alpha) \zeta_H(s, \alpha) + \zeta_H(s-1, \alpha)		\label{eq_double_Hurwits}
		\end{equation}
		of the double Hurwitz zeta-function (See \cite{SrivastavaChoi2001}, p. 86), with the additional parameters $ v$ and $ w $.
		In fact, (\ref{func_eq_dep}) can be rewritten as stated in the following corollary, which explicitly shows that the classical identity 
\eqref{eq_double_Hurwits} is recovered as a special case.
	\end{remark}

	\bigskip

	%================= Corollary3 =================================================
	%==============================================================================
	%==============================================================================
	\begin{corollary}\label{Cor}
	    If $v$ and $w$ are linearly dependent over $\mathbb{Q}$, then there exist
$p,q\in\mathbb{N}$ such that $pv=qw$ and $(p,q)=1$. Then we have
		\begin{align}
		    & \zeta_2(s, \alpha; v,w)		\nonumber \\
			&= \frac{\Gamma(1-s)}{(2\pi)^{1-s}}e^{\pi i(1-s)/2}
			\left\{ \frac{1}{v^s} 
			\mathop{\sum_{n=-\infty}}\limits_{q \; \mid \hspace{-.25em} / \, n}^\infty
			\frac{e^{2\pi in(vy_1+wy_2)/v}}
			{(e^{2\pi inw/v}-1)n^{1-s}} + \frac{1}{w^s}
			\mathop{\sum_{n=-\infty}}\limits_{p \; \mid \hspace{-.25em}/ \, n}^\infty
			\frac{e^{2\pi in(vy_1+wy_2)/w}}
			{(e^{2\pi inv/w}-1)n^{1-s}} \right\} \nonumber \\
			& \quad 
			+ \left( \frac{p+q}{pv} - \frac{vp}{2q} - \frac{v}2 - \frac{\alpha q}{pv^2}  \right) 
			\left( \frac{q}{v} \right)^{s-1}
			\zeta_H \left(s, \frac{q\alpha}v \right) 
			+ \frac{q^{s-1}}{pv^s}
			\zeta_H\left(s-1, \frac{q\alpha}v \right),
			\nonumber \\
			\label{cor3} 
		\end{align}
		where the arguments are chosen so that
$-\theta<\arg(2\pi i n/v),\arg(2\pi i n/w)<-\theta+\pi$.
The two series on the right-hand side converge absolutely and locally
uniformly in $s\in\mathbb{C}$ if $(y_1,y_2)\ne(0,0)$, and converge
absolutely and locally uniformly in the half-plane $\mathrm{Re}(s)<0$
if $y_1=0$ or $y_2=0$.
	%Moreover, by setting $ v=w=1 $ in (\ref{cor3}), we obtain (\ref{eq_double_Hurwits}).
	\end{corollary}
	%================= Corollary3 ===============================================
	%============================================================================
	%============================================================================
	%===============================================================================
	\bigskip

	\begin{remark}\label{rem:zeta_r}
	The functional equation for Theorem \ref{th:Main_Theorem1}(i) was given by Y. Komori, K. Matsumoto, H. Tsumura \cite{KomoriMatsumotoTsumura2013} for the general $r$-multiple zeta-function  $ \zeta_r(s, \alpha ; w_1, \ldots , w_r) $.
		Its definition is given as follows. Let $ r $ be a positive integer, and let $ w_j \in H(\theta) \ (j = 1, \ldots , r) $ be complex parameters.
		The Barnes multiple zeta-function is defined by
		\begin{equation}\label{B-zeta}
			\zeta_r (s, \alpha ; w_1, \ldots , w_r)
			= \sum_{m_1 = 0}^\infty \cdots \sum_{m_r = 0}^\infty
			\frac{1}{(\alpha + w_1 m_1 + \cdots + w_r m_r)^s}
		\end{equation}
		where the series on the right-hand side is 
		%$ s $ is a complex variable, and $ \alpha > 0 , w_1, \cdots, w_r > 0 $
		%are real parameters ({\S}38 in \cite{B2}, {\S}15 in \cite{B3}). 
		%This multiple series is 
		absolutely convergent for $ \mathrm{Re}(s) > r $, and can be continued meromorphically to $ \C $ and its only singularities are 
		the simple poles located at $ s = j \ (j = 1, \ldots , r) $.
		In their result in \cite{KomoriMatsumotoTsumura2013}, the complex parameters $ w_1, w_2, \cdots , w_r $ are given under the condition that they satisfy $ \mathrm{Im}(w_j/w_k) \neq 0 \ (j \neq k) $. 
		However, in the results of Theorem \ref{th:Main_Theorem1}, $v$ and $w$ are classified into linearly independent and linearly dependent cases over $\Q$, extending their results to include the rational dependence case.
		
	\end{remark}
	
	\bigskip

	%================= Theorem4 ===================================================
	%==============================================================================
	%==============================================================================
	\begin{theorem}[Convexity bound]\label{th:convexity}
	Assume that $\alpha, v, w > 0$ are real numbers and that $v/w$ is algebraic. We have
		\[
		\zeta_2\left(\sigma+it, \alpha ; v,w \right) \ll 
		\begin{cases}
			|t|^{(2-\sigma)/4 + \varepsilon} & (v/w \in \overline{\Q} \backslash \Q),	\\
			|t|^{3(2-\sigma)/4 + \varepsilon} & (v/w \in \Q).
		\end{cases}
		\]
	\end{theorem}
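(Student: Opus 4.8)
The plan is the classical convexity argument familiar from $\zeta(s)$: estimate $\zeta_2(s,\alpha;v,w)$ on a vertical line to the right of $\mathrm{Re}(s)=2$ and on a vertical line to the left of $\mathrm{Re}(s)=0$, then interpolate by the Phragm\'en--Lindel\"of convexity principle. Since $v,w>0$ and $\alpha=v(1-y_1)+w(1-y_2)$ is a positive real, we have $\zeta_2(\bar s,\alpha;v,w)=\overline{\zeta_2(s,\alpha;v,w)}$, so it is enough to treat $t\to+\infty$. By Remark \ref{rem:zeta_r} (with $r=2$) the only poles of $\zeta_2(\cdot,\alpha;v,w)$ are the simple real ones at $s=1,2$, so on the half-strip $\{-\eta\le\sigma\le2+\eta,\ t\ge1\}$ the function is holomorphic and of finite order (readable off, say, from Barnes' integral representation, or a posteriori from the edge bounds below), and the convexity principle applies there. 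On the edge $\sigma=2+\eta$ the series (\ref{B_2-zeta}) converges absolutely, so $|\zeta_2(2+\eta+it,\alpha;v,w)|\le\zeta_2(2+\eta,\alpha;v,w)=O_\eta(1)$. On the edge $\sigma=-\eta$ we invoke the functional equations, using that by Stirling's formula the common prefactor satisfies $\bigl|\tfrac{\Gamma(1-s)}{(2\pi)^{1-s}}e^{\pi i(1-s)/2}\bigr|\asymp_\sigma|t|^{1/2-\sigma}$ as $t\to+\infty$.

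\textbf{Case $v/w\in\Q$.} Apply Corollary \ref{Cor}. On $\sigma=-\eta$ the two series $\sum_{q\nmid n}$ and $\sum_{p\nmid n}$ have denominators $e^{2\pi inp/q}-1$, $e^{2\pi inq/p}-1$ bounded away from $0$ uniformly in $n$ (as $p,q$ are fixed), hence are $O_{p,q,\eta}(1)$ and contribute $\ll|t|^{1/2+\eta}$ once multiplied by the prefactor. The two Hurwitz terms are controlled by the convexity bound $\zeta_H(\sigma'+it,a)\ll_{a,\varepsilon}|t|^{1/2-\sigma'+\varepsilon}$ for $\sigma'\le0$ and fixed $a>0$ (itself deduced from (\ref{eq:func-eq_zeta_H}) and Phragm\'en--Lindel\"of just as for $\zeta$), giving $\ll|t|^{1/2+\eta+\varepsilon}$ for the term with $\zeta_H(s,q\alpha/v)$ and $\ll|t|^{3/2+\eta+\varepsilon}$ for the dominant term with $\zeta_H(s-1,q\alpha/v)$. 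Hence $|\zeta_2(-\eta+it,\alpha;v,w)|\ll|t|^{3/2+\eta+\varepsilon}$.

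\textbf{Case $v/w\in\bQ\setminus\Q$.} Now (\ref{Thm1(i)}) carries no Hurwitz term, so $\zeta_2(s,\alpha;v,w)$ equals the prefactor above times $v^{-s}S_1(s)+w^{-s}S_2(s)$, where $S_1,S_2$ denote the two series in (\ref{Thm1(i)}). The crucial estimate — and the sole place where the \emph{algebraicity} of $v/w$, not just its irrationality, enters — is $S_j(-\eta+it)\ll_\varepsilon|t|^{\varepsilon}$. To get it, write $\tfrac{1}{e^{2\pi in\beta}-1}=-\tfrac12-\tfrac i2\cot(\pi n\beta)$ with $\beta=w/v$ (resp.\ $v/w$): the constant part turns $S_j$ into a Lerch zeta-function $\zeta_L(1-s,1,\gamma)$, $\gamma=y_1+\beta y_2\in\R$ (a shift of $\zeta_H$ in the degenerate case $\gamma\in\Z$), which is $O_\varepsilon(|t|^\varepsilon)$ on $\sigma=-\eta$ by the convexity bound coming from (\ref{eq:func-eq_zeta_L}); for the cotangent part one appeals to Roth's theorem, which — because $\beta$ is an irrational algebraic number — yields $\|n\beta\|\gg_\varepsilon|n|^{-1-\varepsilon}$ (with $\|\cdot\|$ the distance to the nearest integer), hence $|\cot(\pi n\beta)|\ll_\varepsilon|n|^{1+\varepsilon}$ and, via the continued-fraction (Ostrowski) expansion of $n$ relative to $\beta$, the cancellation bound $\sum_{n\le N}e^{2\pi in\gamma}\cot(\pi n\beta)\ll_\varepsilon N^{\varepsilon}$; summation by parts then gives $S_j(-\eta+it)\ll_\varepsilon|t|^\varepsilon$ (so in particular $S_j$ converges for $\mathrm{Re}(s)<1$). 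Therefore $|\zeta_2(-\eta+it,\alpha;v,w)|\ll|t|^{1/2+\eta+\varepsilon}$.

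Finally, apply the Phragm\'en--Lindel\"of convexity principle on $-\eta\le\sigma\le2+\eta$, $t\ge1$, with exponent $0$ on the edge $\sigma=2+\eta$ and exponent $\beta_0+\eta+\varepsilon$ on the edge $\sigma=-\eta$, where $\beta_0=3/2$ if $v/w\in\Q$ and $\beta_0=1/2$ if $v/w\in\bQ\setminus\Q$: the interpolated exponent at $\sigma$ is $(\beta_0+\eta+\varepsilon)\,\tfrac{2+\eta-\sigma}{2+2\eta}$, which for $\eta$ small enough in terms of $\varepsilon$ is at most $\tfrac{3(2-\sigma)}{4}+\varepsilon$, resp.\ $\tfrac{2-\sigma}{4}+\varepsilon$, for $0\le\sigma\le2$; the case $t\to-\infty$ follows by conjugation. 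I expect the main obstacle to be the cancellation estimate $\sum_{n\le N}e^{2\pi in\gamma}\cot(\pi n\beta)\ll_\varepsilon N^{\varepsilon}$ in the irrational case: everything else is a routine assembly of the functional equations of Theorem \ref{th:Main_Theorem1} and Corollary \ref{Cor}, Stirling's formula, the standard convexity bounds for $\zeta_H$ and $\zeta_L$, and the convexity principle, whereas this estimate is exactly where the hypothesis ``$v/w$ algebraic'' is indispensable, through Roth's theorem controlling the partial quotients of $v/w$.
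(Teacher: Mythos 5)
Your overall architecture is exactly the paper's: bound $\zeta_2$ by $O(1)$ on $\sigma=2+\eta$ via absolute convergence, bound it on $\sigma=-\eta$ via the functional equation and Stirling's formula (prefactor $\asymp|t|^{1/2-\sigma}$), and interpolate with Lemma \ref{th:Phragmen-Lindelof}. Your treatment of the rational case is correct and essentially equivalent to the paper's: you work from Corollary \ref{Cor} and feed the two Hurwitz terms into the standard convexity bound $\zeta_H(\sigma'+it,a)\ll|t|^{1/2-\sigma'+\varepsilon}$, so that $\zeta_H(s-1,q\alpha/v)$ dominates with exponent $3/2+\eta+\varepsilon$; the paper instead bounds the equivalent term $(1-s)\zeta_L(2-s,1,-q\alpha/v)$ of (\ref{func_eq_dep}) directly, using that $\zeta_L(2-s,\cdot,\cdot)$ converges absolutely for $\sigma<0$. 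Both give the left-edge exponent $3/2$ and hence $3(2-\sigma)/4+\varepsilon$.

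In the irrational algebraic case, however, you have manufactured a gap where none is needed. The only input required on $\sigma=-\eta$ is that the two series $S_1,S_2$ in (\ref{Thm1(i)}) are $O_\eta(1)$, and this is precisely Lemma \ref{lem_2nd}(i) (resting on Lemma \ref{A1,Lem1}): the series converge \emph{absolutely} for $\sigma<0$. The mechanism is not the pointwise Roth bound alone --- as you implicitly notice, $|\cot(\pi n\beta)|\ll_\varepsilon n^{1+\varepsilon}$ gives terms of size $n^{\varepsilon-\eta}$, whose sum diverges --- but the \emph{averaged} Diophantine estimate $\sum_{n\le m}(n|\langle n\beta\rangle|)^{-1}=O(m^{2\varepsilon})$ from Kuipers--Niederreiter combined with Thue--Siegel--Roth, which restores absolute convergence for $\sigma<0$ by partial summation. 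Your substitute, the cancellation bound $\sum_{n\le N}e^{2\pi in\gamma}\cot(\pi n\beta)\ll_\varepsilon N^{\varepsilon}$, is both unnecessary and unsubstantiated: you flag it yourself as the main obstacle, you give no proof of it, and at that strength it is far beyond what Ostrowski-type arguments yield for cotangent sums at a general algebraic irrational (one should expect power-of-$N$ growth in general; for the partial-summation argument against $n^{-1-\eta}$ any bound $o(N^{1+\eta})$ would already do, but even that is not established here). Replacing this step by the absolute-convergence statement of Lemma \ref{lem_2nd} closes the gap and reduces your argument to the paper's.
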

	%================= Theorem4 ===================================================
	%==============================================================================
	%==============================================================================
	\bigskip
	
	Taking $ \sigma=1, 3/2 $ in this theorem, we obtain the following corollary.

	\medskip 
	
	%================= Example  ====================================================
	\begin{example}\label{Cor_2nd}
		If $ v>0, w>0 $ and $ v/w \in \overline{\Q} \backslash \Q $, we have
		\[
		\zeta_2\left(1+it, \alpha ; v,w \right) \ll 
		|t|^{1/4 + \varepsilon}
		\]
		and
		\begin{equation}\label{sigma=3/2}
			\zeta_2\left(\frac32+it, \alpha ; v,w \right) \ll 
			|t|^{1/8 + \varepsilon}.
		\end{equation}
	\end{example}
	%================= Example  ==================================================== 
	
	%\begin{remark}
	%In particular, the result of (\ref{sigma=3/2}) is a partial improvement of $ \zeta_2\left(3/2+it, \alpha ; v,w \right) \ll |t|^{1/3} $ (Theorem 6 in \cite{TM3}).
	%However, in the case of $ v,w $ are linearly independent over $ \Q $, from Theorem \ref{th:convexity}, the result of $ \zeta_2\left(3/2+it, \alpha ; v,w \right) \ll |t|^{1/3} $ (Theorem 6 in \cite{TM3}) is superior, because of 
	%$ \zeta_2\left(3/2+it, \alpha ; v,w \right) \ll |t|^{3/8} $ by Theorem \ref{th:convexity}.
	%\end{remark}
	
	\medskip
	
	%================= Theorem6 ====================================================
	\begin{theorem}\label{Lindelof_zeta_2}
		Suppose that $ v>0, w>0 $ and $ v/w \in \overline{\Q} \backslash \Q $. For any $ \varepsilon > 0 $,
		\[
		\zeta_2(\sigma +it, \alpha; v,w) = O(t^\varepsilon)  \qquad 
		\left( \frac12 \leq \sigma \leq 2 \right)
		\]
		holds if and only if
		\[
		\int_2^T |\zeta_2(\sigma +it, \alpha; v,w)|^{2k} dt = O(T^{1+\varepsilon}) 
		\qquad \left( \frac12 \leq \sigma \leq 2 \right)
		\]
		holds for any $ k \in \N $ and any $ \varepsilon > 0 $.
		where the implied constant depends only on $\varepsilon$ and $k$.
	\end{theorem}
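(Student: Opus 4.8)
The plan is to mimic the classical equivalence for $\zeta(s)$ between the Lindel\"of hypothesis and the bound on all even moments, adapting the standard argument (as in Titchmarsh, \cite{Tit}, {\S}13) to the Barnes double zeta-function. The forward implication is elementary: if $\zeta_2(\sigma+it,\alpha;v,w) = O(t^\varepsilon)$ uniformly on $1/2 \le \sigma \le 2$, then $|\zeta_2(\sigma+it,\alpha;v,w)|^{2k} = O(t^{2k\varepsilon'})$ for any $\varepsilon' > 0$, and integrating over $[2,T]$ gives $\int_2^T |\zeta_2(\sigma+it,\alpha;v,w)|^{2k}\,dt = O(T^{1+2k\varepsilon'})$; since $k$ is fixed, choosing $\varepsilon' = \varepsilon/(2k)$ yields the claimed bound $O(T^{1+\varepsilon})$.

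The substantial direction is the converse: assume the moment bound for every $k \in \N$ and deduce $\zeta_2(\sigma+it,\alpha;v,w) = O(t^\varepsilon)$. First I would record that, by the convexity bound (Theorem \ref{th:convexity}) together with Corollary \ref{Cor_2nd}, $\zeta_2(\sigma+it,\alpha;v,w)$ is of finite order in $t$ on the strip $1/2 \le \sigma \le 2$ (indeed $O(|t|^{3/4+\varepsilon})$ suffices as a crude a priori bound). The key analytic input is a mean-value-to-pointwise lemma: if $f(s)$ is regular and of finite order in a strip $\sigma_1 \le \sigma \le \sigma_2$ and $\int_2^T |f(\sigma+it)|^{2k}\,dt = O(T^{1+\delta})$ on that strip, then $f(\sigma+it) = O(t^{(\delta+\varepsilon)/(2k)})$ in any interior substrip. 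This is proved by applying the Phragm\'en--Lindel\"of principle (the same convexity machinery invoked for Theorem \ref{th:convexity}) to bound $f$ pointwise from its mean square, using Cauchy's integral formula on a small disc of radius $\sim (\log t)^{-1}$ centered at $\sigma + it$: one writes $f(\sigma+it)^{2k}$ as an average of $f(s)^{2k}$ over such a disc, and the hypothesized moment bound controls the integral of $|f|^{2k}$ over the relevant $t$-window $[t-1, t+1]$ after a further averaging over $\sigma$.

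Applying this lemma with $f = \zeta_2(\cdot,\alpha;v,w)$, $\delta = \varepsilon$ arbitrary, and $k$ arbitrary, we get $\zeta_2(\sigma+it,\alpha;v,w) = O(t^{(\varepsilon + \varepsilon')/(2k)})$ for every $k$; letting $k \to \infty$ (and taking $\varepsilon,\varepsilon'$ small) forces the exponent below any prescribed positive number, hence $\zeta_2(\sigma+it,\alpha;v,w) = O(t^\varepsilon)$ for all $\varepsilon > 0$, uniformly on $1/2 \le \sigma \le 2$. The main obstacle is verifying the mean-value-to-pointwise lemma in the present setting: one must ensure the a priori finite-order bound is genuinely uniform on the closed strip (which is where Theorem \ref{th:convexity} and the hypothesis $v/w \in \overline{\Q}\setminus\Q$, $v,w>0$ enter), and one must handle the pole of $\zeta_2$ at $s=1$ and $s=2$ — here we only claim the statement on $1/2 \le \sigma \le 2$, so the poles at $s=1,2$ sit on the boundary/interior; I would either restrict attention away from small neighborhoods of $t$-values near the real axis (harmless since the assertions are about $t \to \infty$) or multiply by $(s-1)(s-2)$ before applying the convexity argument and divide back at the end, which costs only a bounded factor for large $t$. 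Finally one checks that the $O$-constants depend only on $\varepsilon$ and $k$, which is automatic from the uniformity of all the estimates used.
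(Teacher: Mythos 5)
Your outline matches the paper's proof in both directions: the forward implication is the same direct integration of the pointwise bound, and the converse is the same classical ``moments control pointwise values, then let $k\to\infty$'' argument, with Theorem \ref{th:convexity} supplying the a priori polynomial bound. The one substantive difference is how the mean-value-to-pointwise step is implemented. The paper's Lemma \ref{Lem:int} uses the kernel $\bigl(\int_0^B e^{zu}\,du\bigr)^r z^{-1}$ with $r\asymp\log t$ on the circle $|z|=\delta$, deforming the piece with the removable singularity onto the imaginary axis, so that the resulting inequality
\[
|\zeta_2(\sigma+it,\alpha;v,w)|^{2k}\ll(\log t)\int_{-\delta}^{\delta}|\zeta_2(\sigma+i(t+x),\alpha;v,w)|^{2k}\,dx+t^{-A}
\]
involves only the \emph{same} abscissa $\sigma$; this is precisely what allows the moment hypothesis, assumed only for $1/2\le\sigma\le2$, to be applied at the edge $\sigma=1/2$. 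Your disc-averaging (Cauchy/subharmonicity) version needs moment information at nearby abscissas and therefore, as you yourself note, directly yields only interior substrips; you would need an extra step to recover the boundary. Two smaller points: the exponent in your lemma should be $(1+\delta+\varepsilon)/(2k)$, not $(\delta+\varepsilon)/(2k)$ --- a $T^{1+\delta}$ moment bound still permits isolated spikes of height $t^{(1+\delta)/(2k)}$ in $|f|$, and indeed your own concluding step (which needs $k\to\infty$ rather than merely $\delta\to0$) is only coherent with the corrected exponent; and the poles at $s=1,2$ require no special treatment since everything takes place at $t\ge2$ with $\delta<1/2$, which is all the paper uses.
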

	%================= Theorem6 ==================================================== 
	
	\medskip
	
	%%%%%%%%% 3. Some Lemmas %%%%%%%%%%%%%%%%%%%%%%%%%%%%%
	%%%%%%%%%%%%%%%%%%%%%%%%%%%%%%%%%%%%%%%%%%%%%%%%%%%%%%%%%%%%%%%%%%%%%%%%%%%%
	%%%%%%%%%%%%%%%%%%%%%%%%%%%%%%%%%%%%%%%%%%%%%%%%%%%%%%%%%%%%%%%%%%%%%%%%%%%%
	%%%%%%%%%%%%%%%%%%%%%%%%%%%%%%%%%%%%%%%%%%%%%%%%%%%%%%%%%%%%%%%%%%%%%%%%%%%%
	%%%%%%%%%%%%%%%%%%%%%%%%%%%%%%%%%%%%%%%%%%%%%%%%%%%%%%%%%%%%%%%%%%%%%%%%%%%%
	\section{Auxiliary lemmas}
	In this section, we collect several auxiliary lemmas that will be used in the proofs of the main theorems.
	Lemma \ref{A1,Lem1}, \ref{lem_2nd} and \ref{lem_3rd} provide supplementary results on the absolute convergence of infinite series involving exponential terms.
	Lemma \ref{th:Phragmen-Lindelof} states the Phragm\'en-Lindel\"of convexity principle,
        which allows us to derive a simple upper bound.
        %Using this principle, we can obtain a simple upper bound.
	Finally, Lemma \ref{Lem:int} establishes the equivalence of the Lindel\"of conjecture for the Barnes double zeta-function.
	%Then, through these two lemmas, we give proofs of Theorem \ref{th:convexity} and Theorem \ref{Lindelof_zeta_2}.
	
	\bigskip
	%=========================================================
	%=== Lemma7 ==============================================
	\begin{lemma}[Lemma 1 in \cite{Arakawa1982}]\label{A1,Lem1}
		For any irrational real algebraic number $ \beta_0 $ and any pair of real numbers $ (p_1,p_2) $, the infinite series 
		\begin{equation}
			\eta(\beta_0, s, p_1, p_2) = \sum_{n=1}^\infty \frac{e^{2\pi i n(p_1\beta_0 +p_2)}}{(1-e^{2\pi i n\beta_0})n^{1-s}}
		\end{equation}
		converges absolutely for $\Re(s)<0$.
	\end{lemma}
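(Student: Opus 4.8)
The plan is to control the series by separating the analytic factor $n^{s-1}$ from the arithmetic factor $(1-e^{2\pi i n\beta_0})^{-1}$ and bounding the latter using the Diophantine properties of $\beta_0$. Write $s=\sigma+it$ with $\sigma<0$, so that $|n^{s-1}|=n^{\sigma-1}$ decays like $n^{\sigma-1}$. Since the numerator has modulus $1$, the general term is bounded by $n^{\sigma-1}\,|1-e^{2\pi i n\beta_0}|^{-1}$, and the whole problem reduces to estimating the growth of $|1-e^{2\pi i n\beta_0}|^{-1}$ as a function of $n$. We have $|1-e^{2\pi i n\beta_0}| = 2|\sin(\pi n\beta_0)| = 2|\sin(\pi \|n\beta_0\|)|$ where $\|x\|$ denotes the distance to the nearest integer; hence $|1-e^{2\pi i n\beta_0}|^{-1}\asymp \|n\beta_0\|^{-1}$. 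So it suffices to show $\sum_{n\ge 1} n^{\sigma-1}\|n\beta_0\|^{-1}<\infty$ for $\sigma<0$.

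The key input is that an irrational \emph{real algebraic} number $\beta_0$ has finite irrationality measure equal to $2$ by the Thue–Siegel–Roth theorem; in particular, for every $\delta>0$ there is a constant $c=c(\beta_0,\delta)>0$ with $\|n\beta_0\| \ge c\, n^{-1-\delta}$ for all $n\ge 1$. (Even the weaker Liouville-type bound $\|n\beta_0\|\gg n^{-d}$ with $d=\deg\beta_0$ would do, since $\sigma$ can be any negative real.) Substituting this into the sum gives the bound $\sum_{n\ge1} n^{\sigma-1}\cdot c^{-1} n^{1+\delta} = c^{-1}\sum_{n\ge1} n^{\sigma+\delta}$, which converges provided we choose $\delta < -\sigma - 1$, i.e. $\delta<|\sigma|-1$. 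This is fine when $\sigma<-1$; to cover the full range $\sigma<0$ one instead argues as follows: choose $\delta>0$ with $\sigma+\delta<0$ still possibly $\ge -1$, and handle the borderline by noting that the exceptional $n$ for which $\|n\beta_0\|$ is very small are sparse. Concretely, Roth's theorem guarantees only finitely many $n$ with $\|n\beta_0\|<n^{-1-\delta}$; for all but finitely many $n$ we then have $n^{\sigma-1}\|n\beta_0\|^{-1}\le n^{\sigma+\delta}$, and choosing $\delta<-\sigma$ (possible since $-\sigma>0$) makes $\sum n^{\sigma+\delta}$... wait, that needs $\sigma+\delta<-1$. The clean route is therefore: the convergence of $\sum n^{\sigma-1}\|n\beta_0\|^{-1}$ for \emph{all} $\sigma<0$ follows by splitting the range of $n$ dyadically and using that on each block $\{2^k\le n<2^{k+1}\}$ the number of $n$ with $\|n\beta_0\|<2^{-k-1}\varepsilon_0$ is $O(\varepsilon_0 2^k+1)$ (equidistribution / three-distance theorem), so the block contributes $O(2^{k(\sigma-1)}\cdot 2^k\cdot\log(1/\varepsilon_0)^{?})$—absorbing logarithms into $\varepsilon$—and $\sum_k 2^{k\sigma}<\infty$.

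The main obstacle, and the reason the hypothesis ``real algebraic'' is used rather than ``irrational'', is precisely the control of the small denominators $\|n\beta_0\|$: without a Diophantine bound a generic irrational can make $\|n\beta_0\|$ decay faster than any polynomial along a subsequence, destroying convergence for $\sigma$ close to $0$. I would therefore structure the proof as (1) reduce to $\sum n^{\sigma-1}\|n\beta_0\|^{-1}$ via $|1-e^{2\pi in\beta_0}|\asymp\|n\beta_0\|$; (2) invoke Roth's theorem to get $\|n\beta_0\|\gg_\epsilon n^{-1-\epsilon}$; (3) conclude convergence by comparison with $\sum n^{\sigma+\epsilon-1+1}=\sum n^{\sigma+\epsilon}$ after noting the exceptional set is finite and choosing $\epsilon$ small — or, more robustly, by the dyadic counting argument above which avoids any delicate choice of $\epsilon$ relative to $\sigma$. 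Uniformity in $t$ is automatic since $t$ enters only through $|n^{it}|=1$.
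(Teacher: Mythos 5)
Your reduction to $\sum_{n\ge1} n^{\sigma-1}\|n\beta_0\|^{-1}$ via $|1-e^{2\pi i n\beta_0}|\asymp \|n\beta_0\|$ and your appeal to the Thue--Siegel--Roth theorem are exactly the paper's first two steps, and you correctly diagnose the real difficulty: the pointwise bound $\|n\beta_0\|\gg_\delta n^{-1-\delta}$ only yields convergence for $\sigma<-1$, so the range $-1\le\sigma<0$ requires an \emph{averaged} bound on the small denominators. Where you differ is in how that averaged bound is obtained. The paper quotes a ready-made lemma of Kuipers--Niederreiter which, combined with Roth, gives $\sum_{n\le m}(n\|n\beta_0\|)^{-1}=O(m^{2\varepsilon})$ for any fixed $\varepsilon$ with $0<2\varepsilon<-\sigma$; by partial summation $\sum_n n^{-1-2\varepsilon}\|n\beta_0\|^{-1}<\infty$, and the lemma follows since $1-\sigma>1+2\varepsilon$. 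Your dyadic counting argument would prove essentially the same estimate, but the count you lean on is not quite right as stated: for a general real algebraic irrational the number of $n$ in a block of length $2^k$ with $\|n\beta_0\|<\delta$ is $O(\delta 2^k+2^{k\varepsilon})$, not $O(\delta 2^k+1)$ (the latter needs bounded partial quotients); the clean separation argument is that two such $n$ differ by some $m$ with $\|m\beta_0\|<2\delta$, hence $m\gg\delta^{-1/(1+\varepsilon)}$ by Roth, which bounds the count. With that correction each block contributes $O(2^{k(\sigma+2\varepsilon)})$ and the dyadic sum converges once $2\varepsilon<-\sigma$, so your plan does close; but as written the decisive averaged estimate is asserted with a visible question mark rather than proved, and that is precisely the step the paper's citation of Kuipers--Niederreiter supplies.
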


\medskip

\begin{lemma}\label{lem_2nd}
Let $\alpha,v,w\in H(\theta)$, and suppose that
$\alpha=v(1-y_1)+w(1-y_2) \ \ (0\le y_1,y_2<1)$.
Assume that $w/v$ is either non-real or a real algebraic irrational number.
Then the series
\[
 \sum_{\substack{n=-\infty\\ n\ne0}}^\infty
 \frac{e^{2\pi i n(vy_1+wy_2)/v}}
 {(e^{2\pi i n w/v}-1)n^{1-s}}
\]
has the following convergence properties:
\begin{enumerate}
\item[$(i)$] If $w/v$ is a real algebraic irrational number, then the series
converges absolutely for $\sigma<0$.
\item[$(ii)$] If $w/v$ is non-real, then the series converges absolutely for each fixed $s \in \mathbb{C}$, and the convergence is locally uniform in $s$ when $0<y_2<1$, and for $\sigma<0$ when $y_2=0$.
\end{enumerate}
\end{lemma}
	
\begin{proof}
We first rewrite the series as
\begin{align}
\sum_{\substack{n=-\infty\\ n\ne0}}^\infty 
\frac{e^{2\pi i n (vy_1+wy_2)/v}}{(e^{2\pi i n w/v}-1)n^{1-s}}
&=
\sum_{n=1}^\infty
\left\{
\frac{e^{2\pi i n (vy_1+wy_2)/v}}{(e^{2\pi i n w/v}-1)n^{1-s}} 
+
\frac{e^{-2\pi i n (vy_1+wy_2)/v}}{(e^{-2\pi i n w/v}-1)n^{1-s}}
\right\}
\nonumber \\
&= -\eta(w/v, s, y_2, y_1) 
+ \eta(-w/v, s, y_2, -y_1),
\label{eta}
\end{align}
where the functions $\eta(\cdot)$ are defined in Lemma~\ref{A1,Lem1}.

\noindent
(i) \ 
Suppose that $w/v$ is a real algebraic irrational number.  
By Lemma~\ref{A1,Lem1}, both series on the right-hand side of \eqref{eta}
are absolutely convergent for $\sigma<0$. 
%Hence the series \eqref{series:roth} is absolutely convergent for $\sigma<0$.

\noindent
(ii) \ 
Suppose that $w/v$ is non-real.  
We consider the case $\mathrm{Im}(w/v)>0$; the case $\mathrm{Im}(w/v)<0$ is similar.
Put $\mathrm{Im}(w/v)=I>0$.
For $z\in\mathbb{C}$ and $\varepsilon>0$, let $D(z,\varepsilon)$ denote the
closed disk centered at $z$ with radius $\varepsilon$.
Then there exists a constant $M>0$ such that
\[
\left|\frac{1}{e^z-1}\right|
\le M e^{-(\mathrm{Re}(z))_+}
\]
for all $z\in\mathbb{C} \setminus \bigcup_{m\in\mathbb{Z}} D(2\pi i m,\varepsilon)$.
Using this estimate, we obtain
\[
\left|
\frac{e^{2\pi i n (vy_1+wy_2)/v}}{e^{2\pi i n w/v}-1}
\right|
\le
\begin{cases}
M_0 e^{-2\pi n I y_2} & (n>0),\\
M_0 e^{2\pi n I (1-y_2)} & (n<0),
\end{cases}
\]
for some constant $M_0>0$.
\\
If $0<y_2<1$, then
\begin{align*}
\sum_{\substack{n=-\infty\\ n\ne0}}^\infty
\left|
\frac{e^{2\pi i n(vy_1+wy_2)/v}}
{(e^{2\pi i n w/v}-1)n^{1-s}}
\right|
\le
M_0\sum_{n=1}^\infty
\left(
\frac{e^{-2\pi n I y_2}}{n^{1-\sigma}}
+
\frac{e^{-2\pi n I(1-y_2)}}{n^{1-\sigma}}
\right).
\end{align*}
Since both exponential factors decay exponentially, the series on the
right-hand side converges for every fixed $s\in\mathbb{C}$.
Moreover, the convergence is locally uniform in $s$.
\\
If $y_2=0$, then
\[
\sum_{\substack{n=-\infty\\ n\ne0}}^\infty
\left|
\frac{e^{2\pi i n(vy_1+wy_2)/v}}
{(e^{2\pi i n w/v}-1)n^{1-s}}
\right|
\le
M_0\sum_{n=1}^\infty
\left(
\frac{1}{n^{1-\sigma}}
+
\frac{e^{-2\pi n I}}{n^{1-\sigma}}
\right),
\]
which converges for $\sigma<0$.
\end{proof}
	\medskip

	\begin{lemma}\label{lem_3rd}
		Let $ \alpha, v, w \in H(\theta) $, and suppose they are expressed as $ \alpha = v(1-y_1)+w(1-y_2) \ (0 \leq y_1, y_2 < 1) $. Also, assume that $ v,w $ can be expressed as $ pv = qw $ with $ p,q \in \N $ and $(p,q)=1$. The infinite series
		\begin{equation}\label{series:roth2}
			\mathop{\sum_{n=-\infty}}\limits_{q \; \mid \hspace{-.25em} / \, n}^\infty 
			\frac{e^{2\pi in (vy_1+wy_2)/v}}{(e^{2\pi in p/q}-1)n^{1-s}}
		\end{equation}
		is absolutely convergent for all $ s \in \C $ if $ 0 < y_2 < 1 $, and for $ \sigma<0 $ if $ y_2 = 0 $.
	\end{lemma}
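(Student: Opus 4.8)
The plan is to treat $(\ref{series:roth2})$ as the rational–slope analogue of the series in Lemma \ref{lem_2nd}, proving it in two stages: first a uniform lower bound for the denominator, then a comparison of the symmetrized sum with a one–sided Dirichlet series.

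For the first stage, observe that $pv=qw$ forces $w/v=p/q$, so each summand carries the factor $\bigl(e^{2\pi inp/q}-1\bigr)^{-1}$. Since $(p,q)=1$, whenever $q\nmid n$ the residue of $np$ modulo $q$ lies in the finite set $\{1,\dots,q-1\}$; hence $e^{2\pi inp/q}\ne 1$ all along the sum (so $(\ref{series:roth2})$ is well defined) and, more importantly,
\[
\bigl|e^{2\pi inp/q}-1\bigr|\ \ge\ \delta_q:=\min_{1\le k\le q-1}\bigl|e^{2\pi ik/q}-1\bigr|\ >\ 0
\]
uniformly in such $n$. In contrast with Lemma \ref{lem_2nd}$(i)$, where the analogous lower bound required the Thue--Siegel--Roth theorem, here the rationality of $w/v$ makes this step completely elementary. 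I would also record that, using $\alpha=v(1-y_1)+w(1-y_2)$ and $w=(p/q)v$, one has $(vy_1+wy_2)/v=y_1+(p/q)y_2$, so the exponential $e^{2\pi in(vy_1+wy_2)/v}$ is unimodular.

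For the second stage I would split $(\ref{series:roth2})$ into its $n\ge 1$ and $n\le -1$ parts and recombine them just as in $(\ref{eta})$, obtaining a linear combination of two sums of the form $\sum_{q\nmid n,\,n\ge 1}e^{2\pi in(\cdot)}\bigl(e^{\pm 2\pi inp/q}-1\bigr)^{-1}n^{s-1}$. By the displayed bound together with $|e^{2\pi in(\cdot)}|=1$, each such sum is majorized term–by–term by $\delta_q^{-1}\sum_{n\ge 1}n^{\sigma-1}$, which converges precisely for $\sigma<0$; this disposes of the case $y_2=0$, and in fact yields absolute convergence for $\sigma<0$ for all admissible $y_1,y_2$.

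The remaining assertion — absolute convergence on all of $\C$ when $0<y_2<1$ — is the step I expect to be the main obstacle. Unlike the imaginary case of Lemma \ref{lem_2nd}$(ii)$, the rational slope $w/v=p/q$ is real, so the factor $\bigl(e^{2\pi inp/q}-1\bigr)^{-1}$ is merely bounded above and below and supplies no decay in $n$, and neither does the unimodular numerator; decay must therefore be extracted from cancellation in the exponential sum itself. The natural route is to group the terms by residue class $n\equiv r\pmod q$ with $r\in\{1,\dots,q-1\}$: on each class the denominator equals the fixed nonzero constant $e^{2\pi irp/q}-1$, and the inner sum collapses to $e^{2\pi ir(y_1+(p/q)y_2)}\sum_m e^{2\pi im(qy_1+py_2)}(qm+r)^{s-1}$, to which one applies Abel summation together with the elementary estimate $|e^{z}-1|^{-1}\le M e^{-(\mathrm{Re}\,z)_+}$ used in Lemma \ref{lem_2nd}. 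Establishing the required bounds uniformly in $m$, in the finitely many classes $r$, and in $s$ — and thereby obtaining absolute convergence throughout $\C$ under the single hypothesis $0<y_2<1$ — is the delicate heart of the argument.
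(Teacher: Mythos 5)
Your first two stages are correct and complete for the $\sigma<0$ assertion: since $(p,q)=1$ and $q\nmid n$, the residue of $np$ modulo $q$ lies in $\{1,\dots,q-1\}$, so $\bigl|e^{2\pi inp/q}-1\bigr|\ge\delta_q>0$ uniformly; the numerator is unimodular because $(vy_1+wy_2)/v=y_1+(p/q)y_2\in\R$; and term-by-term comparison with $\sum_{n}|n|^{\sigma-1}$ gives absolute convergence exactly when $\sigma<0$. Your elementary lower bound on the denominator is precisely what replaces the Thue--Siegel--Roth input of Lemma \ref{A1,Lem1}, and it is essentially all the paper offers, since the paper gives no proof of this lemma beyond the remark that it goes ``in much the same way as Lemma \ref{lem_2nd}.''

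The gap is in your third stage, and it cannot be closed by the route you propose. Grouping into residue classes and applying Abel summation exploits cancellation, which can at best yield \emph{conditional} convergence; absolute convergence concerns $\sum|a_n|$, and your own two-sided estimate shows $|a_n|\asymp|n|^{\sigma-1}$ (with implied constants depending only on $s$ and $q$) for every $n$ with $q\nmid n$, \emph{independently} of $y_1,y_2$. Hence $\sum_{q\nmid n}|a_n|$ diverges for every $\sigma\ge0$, and the claimed absolute convergence on all of $\C$ for $0<y_2<1$ is not merely hard — it is false as stated. The mechanism that rescues Lemma \ref{lem_2nd}$(ii)$ is $\mathrm{Im}(w/v)=I\ne0$, which makes $\bigl|e^{2\pi in(w/v)y_2}\bigr|=e^{-2\pi nIy_2}$ exponentially small in one direction and pairs with the decay of $\bigl|e^{2\pi inw/v}-1\bigr|^{-1}$ in the other; here $w/v=p/q$ is real, so that factor is identically $1$ and no decay survives. (Even conditional convergence for $\sigma\ge0$ can fail, e.g.\ when $qy_1+py_2\in\Z$, since then the grouped inner sums do not oscillate.) The honest conclusion of your argument is absolute convergence for $\sigma<0$ only; the whole-plane clause should be flagged as an error inherited from a too-formal analogy with the imaginary case.
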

	The proof is analogous to that of Lemma \ref{lem_2nd}, and is therefore omitted.
	
	\bigskip

	\begin{lemma}[Phragm\'en-Lindel\"of convexity principle in \cite{Titchmarsh1986}]\label{th:Phragmen-Lindelof}
		Let $ \sigma_1, \sigma_2 > 0 $ and define $B$ as
		$B = \{ s = \sigma + it \in \C | \sigma_1 \leq \sigma \leq \sigma_2, t \geq t_0 \} $.
		Assume that $ f $ is continuous on $B$, holomorphic in the interior of $B$, and satisfies
		\[
		|f(s)| \leq c_1 \exp(\exp(c_2 t))
		\]
		where $c_1, c_2 > 0 $ are constant and $ c_2 < \pi /(\sigma_2 - \sigma_1) $.
		If $ |f(s)| \leq A $ in $ \partial B $, then
		\[
		|f(s)| \leq A \quad (s \in B)
		\]
		holds.
	\end{lemma}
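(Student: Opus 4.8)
The plan is to carry out the classical auxiliary-function argument, adapted to the half-strip $B$. Since the hypothesis gives $c_2 < \pi/(\sigma_2-\sigma_1)$, fix a real number $\gamma$ with $c_2 < \gamma < \pi/(\sigma_2-\sigma_1)$ and put $\sigma_0 = (\sigma_1+\sigma_2)/2$. For $\delta > 0$ introduce the entire function
\[
	\phi_\delta(s) = \exp\!\left( -\delta\, e^{-i\gamma(s-\sigma_0)} \right).
\]
For $s = \sigma + it$ one has $\mathrm{Re}\bigl(e^{-i\gamma(s-\sigma_0)}\bigr) = e^{\gamma t}\cos\bigl(\gamma(\sigma-\sigma_0)\bigr)$, and for $s \in B$ we have $|\sigma-\sigma_0| \leq (\sigma_2-\sigma_1)/2$ with $\gamma(\sigma_2-\sigma_1)/2 < \pi/2$, so this real part is $\geq \kappa\, e^{\gamma t}$ where $\kappa := \cos\bigl(\gamma(\sigma_2-\sigma_1)/2\bigr) > 0$. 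Consequently $|\phi_\delta(s)| \leq \exp(-\delta\kappa\, e^{\gamma t}) \leq 1$ on $B$, so $|f\phi_\delta| \leq A$ on $\partial B$, while inside $B$ the estimate $|f(s)\phi_\delta(s)| \leq c_1 \exp\bigl(e^{c_2 t} - \delta\kappa\, e^{\gamma t}\bigr)$ forces $f\phi_\delta \to 0$ as $t \to \infty$, because $\gamma > c_2$ makes the negative term dominate.

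Next I would apply the ordinary maximum modulus principle on the rectangle $B_T = \{\,\sigma_1 \leq \sigma \leq \sigma_2,\ t_0 \leq t \leq T\,\}$, on which $f\phi_\delta$ is continuous and holomorphic in the interior. The three sides of $B_T$ contained in $\partial B$ already carry the bound $|f\phi_\delta| \leq A$; on the top edge $t = T$ the bound $|f\phi_\delta| \leq c_1 \exp\bigl(e^{c_2 T} - \delta\kappa\, e^{\gamma T}\bigr)$ tends to $0$ as $T \to \infty$ with $\delta$ fixed, so we may choose $T$ large enough (and at least $\geq \mathrm{Im}(s)$ for the given point $s$) that it is $\leq A$ too. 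The maximum modulus principle then yields $|f(s)\phi_\delta(s)| \leq A$ throughout $B_T$, hence for every $s \in B$. Letting $\delta \to 0^+$ we have $\phi_\delta(s) \to 1$ for each fixed $s$, and therefore $|f(s)| \leq A$ on all of $B$.

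The one point that requires genuine care --- and where the growth hypothesis enters essentially --- is the construction of $\phi_\delta$: it must have modulus $\leq 1$ across the whole width of the strip, which dictates $\gamma < \pi/(\sigma_2-\sigma_1)$ so that $\cos(\gamma(\sigma-\sigma_0))$ stays positive, and it must decay fast enough to kill the double-exponential majorant $\exp(\exp(c_2 t))$, which dictates $\gamma > c_2$; the existence of an admissible $\gamma$ in between is precisely the hypothesis $c_2 < \pi/(\sigma_2-\sigma_1)$. Once $\phi_\delta$ is available, the lower bound for $\mathrm{Re}\bigl(e^{-i\gamma(s-\sigma_0)}\bigr)$, the choice of truncation height $T$, and the limit $\delta \to 0$ are all routine.
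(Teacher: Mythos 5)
Your proof is correct. Note that the paper itself gives no proof of this lemma --- it is stated as the classical Phragm\'en--Lindel\"of convexity principle and used as a black box --- so there is nothing to compare against; your argument is the standard one, with the auxiliary factor $\phi_\delta(s)=\exp\bigl(-\delta\,e^{-i\gamma(s-\sigma_0)}\bigr)$ for $c_2<\gamma<\pi/(\sigma_2-\sigma_1)$, the maximum modulus principle on truncated rectangles, and the limit $\delta\to 0^+$. All the computations (the lower bound $\mathrm{Re}\bigl(e^{-i\gamma(s-\sigma_0)}\bigr)\geq \kappa e^{\gamma t}$ on the strip, the domination of $e^{c_2t}$ by $\delta\kappa e^{\gamma t}$ on the top edge) check out, and you correctly identify that the admissibility window for $\gamma$ is exactly the hypothesis $c_2<\pi/(\sigma_2-\sigma_1)$. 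The only caveat, not worth dwelling on, is that the step ``choose $T$ so the top edge bound is $\leq A$'' implicitly assumes $A>0$; the degenerate case $A=0$ is handled by first bounding by an arbitrary $\epsilon>0$ and then letting $\epsilon\to 0$.
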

	
	\bigskip
	
	\begin{lemma}\label{Lem:int}
		Suppose that $ \alpha, v, w \in H(\theta) $, and $ v, w $ are linearly independent over $ \Q $.
		Let $ 1/2 \le \sigma \le 2, t \ge 2 $ and $ \ell \in \N $.
		Then
		\begin{equation}
			|\zeta_2(\sigma +it, \alpha; v,w )|^\ell
			\ll (\log{t}) \int_{-\delta}^\delta
			|\zeta_2(\sigma +it+ix, \alpha; v,w )|^\ell dx + t^{-A}
			\label{power_of_l}
		\end{equation}
		for any fixed $ 0 < \delta < 1/2 $ and $ A > 0 $.
	\end{lemma}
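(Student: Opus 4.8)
The plan is to obtain the inequality \eqref{power_of_l} by combining a smoothed mean-value representation of $\zeta_2$ against a kernel that localizes in the $t$-aspect with a crude pointwise bound coming from the functional equation of Theorem~\ref{th:Main_Theorem1}(i). The shape of \eqref{power_of_l} is the classical ``a pointwise value is controlled by a local $L^\ell$-average plus an exponentially small error'' estimate, and the standard route is: write $\zeta_2(\sigma+it)^{\ell/?}$ --- or rather $\zeta_2(\sigma+it)$ raised to the power $\ell$ after applying the triangle inequality to the product structure --- as an integral of itself against a fixed positive-definite approximate identity, e.g.\ via the subordination
\[
f(s)^\ell = \frac{1}{2\pi}\int_{-\infty}^{\infty} f(\sigma+it+ix)^\ell\, \widehat{\phi}(x)\,dx + (\text{tail}),
\]
for a suitable nonnegative Schwartz function $\phi$ with $\phi(0)=1$, and then split the $x$-integral into $|x|<\delta$ and $|x|\ge\delta$. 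On $|x|<\delta$ one gets the main term, picking up the factor $\log t$ from the number of unit sub-intervals needed when one instead uses a compactly supported mollifier of width $\asymp 1/\log t$ rather than a Gaussian (this $\log t$ is exactly the standard loss in such convexity-of-mean-square arguments, cf.\ the proof that \eqref{Lindelof_hypothesis} is equivalent to the moment bound in \S4 of \cite{Tit}). On $|x|\ge\delta$ one must show the contribution is $\ll t^{-A}$: this is where the functional equation enters.

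\medskip

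\noindent\textbf{Step 1 (growth control via the functional equation).} First I would record, using Theorem~\ref{th:Main_Theorem1}(i) (valid here since $v,w$ are linearly independent over $\Q$, so $v/w$ is irrational; under the running hypotheses $v/w$ will also be taken algebraic so the series converges for $\mathrm{Re}(s)<0$, and by the case $(y_1,y_2)\neq(0,0)$ it converges on all of $\C$ otherwise), together with Stirling's formula for $\Gamma(1-s)$, a bound of the form $\zeta_2(\sigma+it,\alpha;v,w) \ll t^{C}$ valid uniformly for $\sigma$ in the strip $[1/2-1,2]$ and $t$ large, for some absolute $C$. Only polynomial growth in $t$ is needed, not sharpness; Stirling gives the $\Gamma$-factor an essentially $t^{1/2-\sigma}e^{-\pi t/2}$-type size on $\mathrm{Re}(s)<0$ while the Dirichlet-type series over $n$ in \eqref{Thm1(i)} is dominated by an absolutely convergent series times $e^{\pi t/2}$ coming from the $e^{-\pi i (1-s)/2}$ phases and the $(2\pi i n/v)^{s-1}$ factors, so the two exponentials cancel and one is left with polynomial growth. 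For $\sigma\ge 0$ the original Dirichlet series converges for $\sigma>2$ and Proposition~\ref{order_of_Barnes_zeta} plus the Phragm\'en--Lindel\"of principle (Lemma~\ref{th:Phragmen-Lindelof}) give polynomial growth throughout $0\le\sigma\le 2$.

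\medskip

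\noindent\textbf{Step 2 (kernel choice and the split).} I would take a kernel $K(x) = K_{\log t}(x)$ which is nonnegative, has $\int K = 1$, is concentrated within $|x|\le\delta$ up to an error that decays faster than any polynomial in $t$ (a standard choice: $K$ the Fej\'er-type kernel of width $1/\log t$ normalized so that $\int_{|x|\ge\delta}K(x)\,dx \ll \exp(-c\,\delta\log t) = t^{-c\delta}$, which can be absorbed into $t^{-A}$ after shrinking, or more safely a smooth compactly supported bump convolved against itself). Then by positivity and the mean-value / Cauchy--Schwarz-free identity one writes $|\zeta_2(\sigma+it)|^\ell \le (\log t)\int_{-\delta}^{\delta}|\zeta_2(\sigma+it+ix)|^\ell K_1(x)\,dx + (\text{error})$, where $K_1$ is bounded; since $K_1 \ll 1$ on its support this is $\ll (\log t)\int_{-\delta}^{\delta}|\zeta_2(\sigma+it+ix)|^\ell\,dx$, which is the right-hand side of \eqref{power_of_l}. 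The error term is $(\log t)$ times $\int_{|x|\ge\delta}|\zeta_2(\sigma+it+ix)|^\ell K_1(x)\,dx$, and here we use Step~1: $|\zeta_2(\sigma+it+ix)|^\ell \ll (t+|x|)^{C\ell}$, and the super-polynomial decay of $K_1$ off $[-\delta,\delta]$ beats this, giving a bound $\ll t^{-A}$ for the prescribed $A$ (one chooses the width parameter in $K$ in terms of $A$, $C$, $\ell$, $\delta$ at the outset).

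\medskip

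\noindent\textbf{The main obstacle} I expect is not the soft functional-analytic skeleton --- which is textbook --- but making the growth estimate in Step~1 genuinely \emph{uniform} in $\sigma\in[1/2,2]$ and in the shift $x\in[-\delta,\delta]$, and in particular handling the strip $0\le\sigma\le 1/2$ where the defining series diverges and one is forced to go through the functional equation. The subtlety is that the finite/infinite series on the right of \eqref{Thm1(i)} involve the Diophantine behaviour of $v/w$ through Lemma~\ref{A1,Lem1}, so the implied constants there depend on $v,w$ but not on $t$ --- one must check this dependence is harmless --- and that the branch choices $-\theta<\arg(2\pi i n/v),\arg(2\pi i n/w)<-\theta+\pi$ interact correctly with Stirling's asymptotics so that the claimed exponential cancellation is exact rather than off by a bounded factor of $e^{c t}$. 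Once that uniform polynomial bound $\zeta_2(\sigma+it+ix,\alpha;v,w)\ll (|t|+1)^{C}$ is in hand for $1/2\le\sigma\le 2$, $|x|\le\delta<1/2$, the rest of \eqref{power_of_l} follows by the kernel argument above with $A$ arbitrary.
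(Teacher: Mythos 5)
Your overall skeleton (a polynomial growth bound in the strip, plus a kernel that localizes the $t$-aspect so that the point value is controlled by a short vertical average up to an error $t^{-A}$) is the same as the paper's, and your Step 1 is essentially what the paper does: it simply invokes the convexity bound of Theorem~\ref{th:convexity} to get $\zeta_2(s+z,\alpha;v,w)\ll t^{1/2}$ for $|z|\le\delta$, $1/2\le\sigma\le 2$. The gap is in Step 2, which is where the entire content of the lemma lives. The ``subordination''
\[
f(s)^\ell=\frac{1}{2\pi}\int_{-\infty}^{\infty}f(\sigma+it+ix)^\ell\,\widehat{\phi}(x)\,dx+(\text{tail})
\]
is not an identity for any fixed nonnegative kernel of total mass $1$: testing on $f(s)=e^{\lambda s}$ gives $e^{\lambda s}\phi(\lambda)$ on the right, not $e^{\lambda s}$, so the ``tail'' is $e^{\lambda s}(1-\phi(\lambda))$, which has no reason to be $O(t^{-A})$ for a function such as $\zeta_2^{\ell}$. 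A holomorphic function is not reproduced by averaging its values over a vertical line through the same point; what is available for free is only the sub-mean-value property of $|f|^\ell$ over a two-dimensional disk or a circle $|z|=\delta$, and converting that circle average into a one-dimensional integral over the segment $[t-\delta,t+\delta]$ is precisely the nontrivial step you have not supplied. The provenance of the factor $\log t$ is likewise asserted rather than derived.

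The paper closes this gap with the classical device (the same one used for the Riemann zeta-function in the Lindel\"of-equivalence theorem): with $r=[C\log t]$ and $B=4/\delta$ one applies the residue theorem to
\[
\int_0^B\!\cdots\!\int_0^B\int_{|z|=\delta}\zeta_2(s+z,\alpha;v,w)^{\ell}\,e^{(u_1+\cdots+u_r)z}\,\frac{dz}{z}\,du_1\cdots du_r=2\pi i\,B^{r}\,\zeta_2(s,\alpha;v,w)^{\ell},
\]
which after performing the $u$-integrals produces the kernel $\bigl((e^{Bz}-1)/z\bigr)^{r}z^{-1}$. On the half of the circle with $\mathrm{Re}(z)<0$, and for the $U^{-z}/z$ part on the other half, the kernel is $\le(2/\delta)^{r}$, so after dividing by $B^{r}$ these contributions are $\ll t^{\ell/2}\,2^{-r}\ll t^{-A}$ once $C$ is chosen large in terms of $\ell$ and $A$ --- this is where the polynomial bound of Step 1 is consumed. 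The remaining part, involving $(U^{z}-U^{-z})/z$, is holomorphic at $z=0$, so the contour deforms onto the imaginary segment from $-i\delta$ to $i\delta$, where $|(U^{z}-U^{-z})/z|\ll\log U\le Br\ll\log t$; this is exactly where the $\log t$ in \eqref{power_of_l} comes from. Without this (or an equivalent) contour-integral mechanism, your write-up does not prove the inequality: the positivity of your kernel buys you nothing because there is no underlying identity to make positive.
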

	\begin{proof}
		Let $ B, C>0 $, and $ r=[C \log{t}] $ with $ s = \sigma +it $.
		By residue theorem, 
		\begin{align*}
			&\int_0^B \cdots \int_0^B \int_{|z|=\delta}
			\zeta_2(s+z,\alpha;v,w)^\ell e^{(u_1+\cdots+u_r)z}z^{-1}
			dz du_1 \cdots du_r		\\
			&\quad = 2\pi i \ \zeta_2(s, \alpha;v,w)^\ell
			\int_0^B \cdots \int_0^B du_1 \cdots du_r	\\
			&\quad = 2\pi i \ B^r \zeta_2(s, \alpha; v,w)^\ell
		\end{align*}
		holds.
		On the other hand, the circular path $ |z| = \delta $ is divided two parts $ \mathrm{Re}(z)<0 $ and $ \mathrm{Re}(z) \geq 0 $, which we denote by $ R_1 $ and $R_2$, respectively.
		The integral for $ R_1 $ is
		\begin{align*}
			&	\int_{R_1} \zeta_2(s+z, \alpha; v,w)^\ell
			\int_0^B e^{zu_1}du_1
			\int_0^B e^{zu_2}du_2
			\cdots
			\int_0^B e^{zu_r}du_r\ \cdot \frac{dz}z	\\
			& = \int_{R_1} \zeta_2(s+z, \alpha; v,w)^\ell
			\left( \frac{e^{Bz}-1}z \right)^r \frac{dz}z,
		\end{align*}
		where $ |e^{Bz}| \leq 1 $ on $ R_1 $, satisfies $ |e^{Bz}-1| \leq 2 $.
		We denote the right-hand side by $ I_0 $.
		Also, since $ 1/2 \leq \sigma \leq 2 $ and $ -1/2 \leq \mathrm{Re}(z) < 0 $ on $ R_1 $, so $ 0 \leq \mathrm{Re}(s+z) < 2 $. 
		Then we have
		\[
		\zeta_2(s+z, \alpha; v,w) \ll 
		t^{\{2-\mathrm{Re}(s+z)\}/4+\varepsilon} \ll t^{1/2}
		\]
		Then, the above equation can be evaluated as
		\[
		\ll \int_{R_1} t^{\ell/2} \cdot \left(\frac2\delta\right)^r |dz|
		\ll t^{\ell/2} \left(\frac2\delta\right)^r.
		\] 
		Also, let $ U = \exp(u_1+u_2+\cdots+u_r) $. We consider separately as  
		\[
		\frac{U^z}z = \frac{U^z - U^{-z}}z + \frac{U^{-z}}z.
		\]
		The integral for $ R_2 $ is
		\begin{align*}
			&\int_0^B \cdots \int_0^B \int_{R_2}
			\zeta_2(s+z,\alpha;v,w)^\ell e^{(u_1+\cdots+u_r)z}z^{-1}
			dz du_1 \cdots du_r		\\
			&\quad = \int_{R_2} \int_0^B \cdots \int_0^B 
			\zeta_2(s+z,\alpha;v,w)^\ell \left(\frac{U^z - U^{-z}}z + \frac{U^{-z}}z \right)
			du_1 \cdots du_r dz		\\
			&\quad = \int_{R_2} \int_0^B \cdots \int_0^B 
			\zeta_2(s+z,\alpha;v,w)^\ell \left(\frac{U^z - U^{-z}}z \right)
			du_1 \cdots du_r dz		\\
			&\qquad \quad + \int_{R_2} \int_0^B \cdots \int_0^B 
			\zeta_2(s+z,\alpha;v,w)^\ell \left(\frac{U^{-z}}z \right)
			du_1 \cdots du_r dz.
		\end{align*}
		We denote the right-hand side by $ I_1 + I_2 $.
		Since $ |e^{-Bz} -1 | \leq 2 $ on $ z \in R_2 $.
		Also, since $ 1/2 \leq \sigma \leq 2 $ and $ 0 \leq \mathrm{Re}(z) \leq 1/2 $ on $ R_2 $, so $ 1/2 \leq \mathrm{Re}(s+z) \leq 3/2 $. 
		Then we have
		\[
		\zeta_2(s+z, \alpha; v,w) \ll 
		t^{\{2-\mathrm{Re}(s+z)\}/4+\varepsilon} \ll t^{3/8}.
		\]
		Then, $ I_1 $ can be evaluated as
		\begin{align*}
			I_1 
			&= \int_{R_2} \zeta_2(s+z, \alpha; v,w)^\ell
			\left( \frac{1-e^{-Bz}}z \right)^r \frac1z dz
			\ll \int_{R_2} t^{3\ell/8} \cdot \left(\frac2\delta\right)^r |dz| \\
			&\ll t^{3\ell/8} \left(\frac2\delta\right)^r
		\end{align*}
        by Theorem \ref{th:convexity}.
		Next, we consider $ I_2 $. Since $ (U^z - U^{-z})/z $ has a removable singularity at $ z = 0 $, so it is holomorphic at $z=0$. By using Cauchy's integral theorem, the contour $ R_2 $ can be deformed into a path from $ -i\delta $ to $ i \delta $ along the imaginary axis. 
		Then we have 
		\begin{align*}
			&\int_0^B \cdots \int_0^B \int_{-i\delta}^{i\delta}
			\zeta_2(s+z,\alpha;v,w)^\ell \left(\frac{U^z - U^{-z}}z \right)
			dz du_1 \cdots du_r		\\
			& \ll \int_0^B \cdots \int_0^B \left| \int_{-i\delta}^{i\delta}
			\zeta_2(s+z,\alpha;v,w)^\ell \left(\frac{U^z - U^{-z}}z \right)
			dz \right|
			du_1 \cdots du_r
		\end{align*}
		Here, if $ z = ix (-\delta \leq x \leq \delta) $, then
		\begin{align*}
			\left| \frac{U^z - U^{-z}}z \right|
			&= \left| \frac{U^{ix} - U^{-ix}}x \right|
			= \frac2{|x|} \left| \frac{e^{ix\log{U}} - e^{-ix \log{U}}}{2i} \right| 
			= 2 \left| \frac{\sin{(x \log{U})}}x \right|	\\
			& \ll \log{U} = u_1 + u_2 + \cdots + u_r \leq Br	\\
			& \ll BC\log{t}.
		\end{align*}
		Then we have
		\[
		I_2 \ll B^{r+1} C \int_{-\delta}^\delta 
		\left|\zeta_2(\sigma+it+ix,\alpha;v,w) \right|^\ell(\log{t}) dx.
		\]
		Using the evaluation results of $ I_0, I_1 $ and $ I_2 $, we obtain
		\begin{align*}
			|\zeta_2(\sigma+it,\alpha;v,w)|^\ell
			&\leq \frac1{2\pi i} \cdot \frac1{B^r}(I_2 + I_1 + I_0)	\\
			&\ll BC \int_{-\delta}^\delta 
			\left|\zeta_2(\sigma+it+ix,\alpha;v,w) \right|^\ell(\log{t}) dx
			+ t^{\ell/2} \left(\frac2{B\delta}\right)^r.
		\end{align*}
		By setting $ B = 4/ \delta $, the second term on the rightmost side of the above equation can be evaluated as
		\[
		t^{\ell/2} \left(\frac2{B\delta}\right)^r = t^{\ell} \cdot 2^{-r}
		\ll t^{\ell/2 - C \log{2}}.
		\]
		Furthermore, if we choose $C$ such that $ C> \ell/ \log{4} $, we obtain
		\[
		|\zeta_2(\sigma+it,\alpha;v,w)|^\ell
		\ll (\log{t}) \int_{-\delta}^\delta 
		\left|\zeta_2(\sigma+it+ix,\alpha;v,w) \right|^\ell dx
		+ t^{-A}.
		\]
		as $ A = C \log{2} - \ell/2 $.
	\end{proof}
	
	%%%%%%%% 2. proof of Theorem1  
	\section{Proof of Main Results}
	
	In this section, we provide proofs of Theorem \ref{th:Main_Theorem1}, Theorem \ref{th:convexity} and Theorem \ref{Lindelof_zeta_2}.
	First, we present a proof of Theorem \ref{th:Main_Theorem1}. In the course of this proof, we use Lemmas \ref{lem_2nd} and \ref{lem_3rd} to  discuss the absolute convergence of certain infinite series.
	Finally, we give proofs of Theorem \ref{th:convexity} and Theorem \ref{Lindelof_zeta_2}, which rely on Lemma \ref{th:Phragmen-Lindelof} and Lemma \ref{Lem:int}, respectively.
	
	\medskip
	 
	\begin{proof}[proof of Theorem \ref{th:Main_Theorem1}]
	Assume that $ \mathrm{Re}(s)>2 $. For $ x \in H(\theta) $, we have the following formula for the gamma function:
	\[
	x^{-s} = \frac1{\Gamma(s)} \int_0^{e^{-i\theta}\infty} e^{-xt}t^{s-1}dt.
	\]
	Then we obtain
	\begin{align}
	    \zeta_2(s, \alpha; v, w)
		&= \sum_{m=0}^\infty \sum_{n=0}^\infty 
		\frac{1}{(\alpha + v m + w n)^s}	\nonumber \\
		&= \sum_{m=0}^\infty \sum_{n=0}^\infty
		\frac1{\Gamma(s)} \int_0^{e^{-i \theta}\infty} z^{s-1} e^{-(\alpha+vm+wn)z}dz		\nonumber \\
		&= \frac1{\Gamma(s)} \int_0^{e^{-i \theta}\infty} 	
		\frac{z^{s-1} e^{(v+w-\alpha)z}}{(e^{vz}-1)(e^{wz}-1)} dz	\nonumber \\
		&= \frac1{\Gamma(s)(e^{2\pi is}-1)}
		\int_{C(\theta)} 	
		\frac{z^{s-1} e^{(v+w-\alpha)z}}{(e^{vz}-1)(e^{wz}-1)} dz
		\label{contour_of_Barnes},
	\end{align}
	where the argument of $t$ is taken in the range $ -\theta \leq \arg{t} \leq -\theta + 2\pi $ and $ C(\theta) $ is a contour which starts at $ e^{-i \theta}\infty $, moves counterclockwise around the origin with a sufficiently small radius, and ends at $ e^{-i \theta}\infty $.
	%%% integral path C(θ) %%%%%%%%%%%%%%%%%%%%%%%%%%%%%%%%%
	%%%%%%%%%%%%%%%%%%%%%%%%%%%%%%%%%%%%%%%%%%%%%%%%%%%%%%%%
	%%%%%%%%%%%%%%%%%%%%%%%%%%%%%%%%%%%%%%%%%%%%%%%%%%%%%%%%
	%%%%%%%%%%%%%%%%%%%%%%%%%%%%%%%%%%%%%%%%%%%%%%%%%%%%%%%%
	%%%%%%%%%%%%%%%%%%%%%%%%%%%%%%%%%%%%%%%%%%%%%%%%%%%%%%%%
	%%%%%%%%%%%%%%%%%%%%%%%%%%%%%%%%%%%%%%%%%%%%%%%%%%%%%%%%
	%%%%%%%%%%%%%%%%%%%%%%%%%%%%%%%%%%%%%%%%%%%%%%%%%%%%%%%%
    \begin{center}
    \includegraphics[
  width=0.6\textwidth,
  trim=3cm 5cm 2cm 3.5cm, %LBRT(Left, Bottom, Right, Top)
  clip
    ]{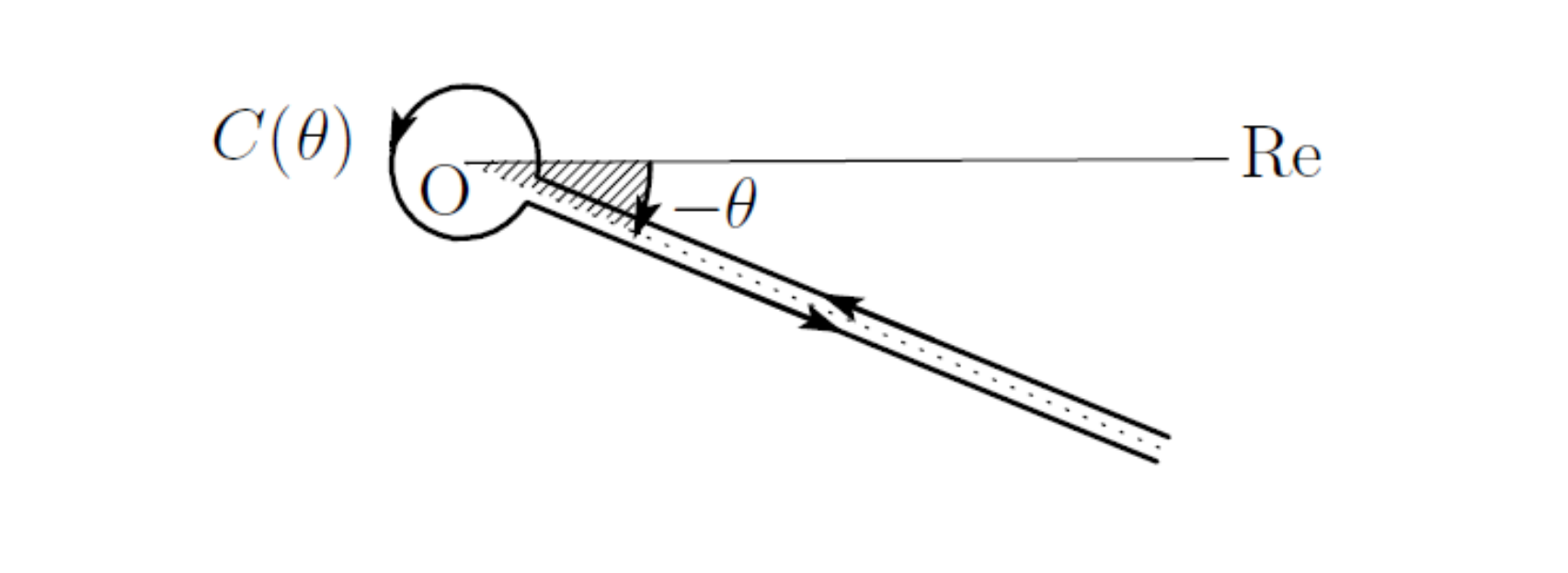}
    \end{center}
	%%% integral path C(θ) %%%%%%%%%%%%%%%%%%%%%%%%%%%%%%%%%
	%%  contour_integral_path5  %%%%%%%%%%%%%%%%%%%%%%%%%%%%%%%%%
	%%%%%%%%%%%%%%%%%%%%%%%%%%%%%%%%%%%%%%%%%%%%%%%%%%%%%%%%
	%%%%%%%%%%%%%%%%%%%%%%%%%%%%%%%%%%%%%%%%%%%%%%%%%%%%%%%%
	%%%%%%%%%%%%%%%%%%%%%%%%%%%%%%%%%%%%%%%%%%%%%%%%%%%%%%%%
	%%%%%%%%%%%%%%%%%%%%%%%%%%%%%%%%%%%%%%%%%%%%%%%%%%%%%%%%
	%%%%%%%%%%%%%%%%%%%%%%%%%%%%%%%%%%%%%%%%%%%%%%%%%%%%%%%%
    %
	Next, we consider deforming the contour $C(\theta)$ to a new integration path $ C_R(\theta) $ that includes a large circle of radius $ R $. We also examine the residues of the integrand in (\ref{contour_of_Barnes}) given by
	\[
	F(z) = \frac{z^{s-1}e^{(v+w-\alpha)z}}{(e^{vz}-1)(e^{wz}-1)},
	\]
	where, $R$ is a sufficiently large positive real number such that $ C_R(\theta) $ does not pass through any singular point of F(z) .
	Note that the residues of $ F(z) $ depend on whether the parameters $ v$ and $ w $ are linearly independent or linearly dependent over $\Q$.
	\\

	%%% integral path C_R(θ) %%%%%%%%%%%%%%%%%%%%%%%%%%%%%%%%%
	%%%%%%%%%%%%%%%%%%%%%%%%%%%%%%%%%%%%%%%%%%%%%%%%%%%%%%%%
	%%%%%%%%%%%%%%%%%%%%%%%%%%%%%%%%%%%%%%%%%%%%%%%%%%%%%%%%
	%%%%%%%%%%%%%%%%%%%%%%%%%%%%%%%%%%%%%%%%%%%%%%%%%%%%%%%%
	%%%%%%%%%%%%%%%%%%%%%%%%%%%%%%%%%%%%%%%%%%%%%%%%%%%%%%%%
	%%%%%%%%%%%%%%%%%%%%%%%%%%%%%%%%%%%%%%%%%%%%%%%%%%%%%%%%
	%%%%%%%%%%%%%%%%%%%%%%%%%%%%%%%%%%%%%%%%%%%%%%%%%%%%%%%%
    \begin{center}
    \includegraphics[
  width=\textwidth,
  trim=0cm 1cm 0cm 1cm, %LBRT(Left, Bottom, Right, Top)
  clip
    ]{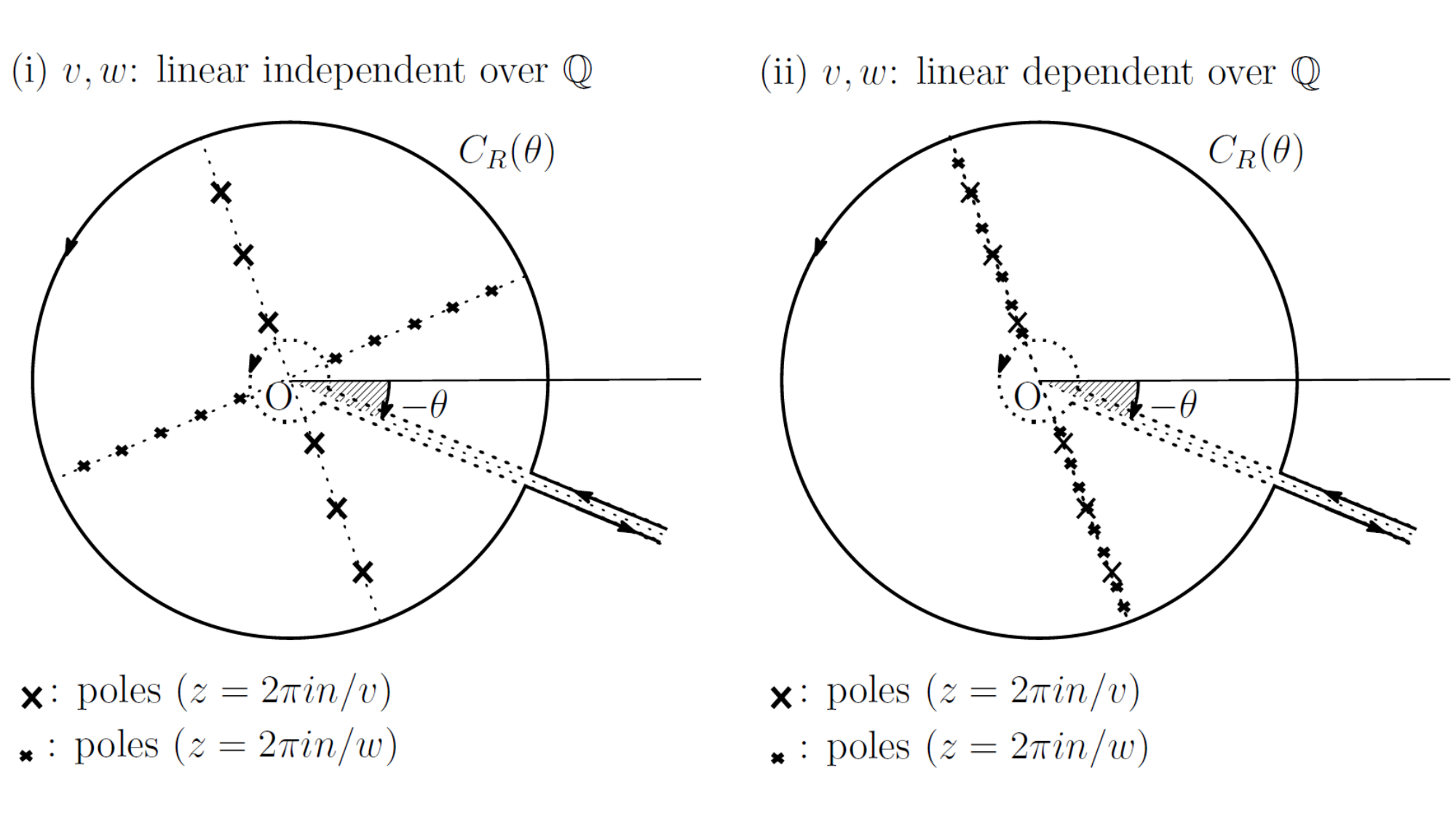}
    \end{center}
	%%% integral path C_R(θ) %%%%%%%%%%%%%%%%%%%%%%%%%%%%%%%%%
	%%%%%%%%%%%%%%%%%%%%%%%%%%%%%%%%%%%%%%%%%%%%%%%%%%%%%%%%
	%%%%%%%%%%%%%%%%%%%%%%%%%%%%%%%%%%%%%%%%%%%%%%%%%%%%%%%%
	%%%%%%%%%%%%%%%%%%%%%%%%%%%%%%%%%%%%%%%%%%%%%%%%%%%%%%%%
	%%%%%%%%%%%%%%%%%%%%%%%%%%%%%%%%%%%%%%%%%%%%%%%%%%%%%%%%
	%%%%%%%%%%%%%%%%%%%%%%%%%%%%%%%%%%%%%%%%%%%%%%%%%%%%%%%%
	%%%%%%%%%%%%%%%%%%%%%%%%%%%%%%%%%%%%%%%%%%%%%%%%%%%%%%%%
	%
	\bigskip
	(i) In the case where $ v, w $ are linearly independent over $ \Q $, and $ v/w $ is an imaginary or irrational real algebraic number,
		$ F(z) $ has simple poles at 	
		\[
		z = \frac{2\pi in}{v},\ \frac{2\pi in}{w} \ (n = \pm1,\ \pm2,\ \cdots).
		\]
		Since
		\begin{align*}
		    \lim_{z \rightarrow 2\pi in/v}\left(z - \frac{2\pi in}{v}\right) F(z)
			&= \lim_{z \rightarrow 2\pi in/v}\left(z - \frac{2\pi in}{v}\right)	
			\frac{z^{s-1}e^{(v + w -\alpha)z}}{(e^{vz}-1)(e^{wz}-1)}	\\
			&= \lim_{z \rightarrow 2\pi in/v} \left( \frac{e^{vz}-e^{2\pi in}}{z-2\pi in/v} \right)^{-1}
			\frac{z^{s-1} e^{(v + w -\alpha)z}}{e^{wz}-1}	\\
			&= \frac{1}{v} \left(\frac{2\pi in}{v} \right)^{s-1}
			\frac{e^{(w-\alpha)2\pi in/v}}{e^{2\pi inw/v}-1},
		\end{align*}
		thus, we have
		\begin{align*}
			\mathop{\mathrm{Res}}\limits_{z = 2\pi in/v} F(z) 
			= \frac{1}{v} \left(\frac{2\pi in}{v} \right)^{s-1}
			\frac{e^{(w-\alpha)2\pi in/v}}{e^{2\pi inw/v}-1}, \quad
			\mathop{\mathrm{Res}}\limits_{z = 2\pi in/w} F(z) 
			= \frac{1}{w} \left(\frac{2\pi in}{w} \right)^{s-1}
			\frac{e^{(v-\alpha)2\pi in/w}}{e^{2\pi inv/w}-1}
		\end{align*}
		and we obtain
		\begin{align}
			& \zeta_2 (s, \alpha ; v,w)	\nonumber \\
			& = - \frac{\Gamma(1-s)}{(2\pi i)^{1-s}e^{\pi is}}
			\left\{ \frac{1}{v^s} \sum_{0<|n|\leq K} 
			\frac{e^{2\pi in(w-\alpha)/v}}{(e^{2\pi inw/v}-1)n^{1-s}} 
			+ \frac{1}{w^s} \sum_{0<|n|\leq L} \frac{e^{2\pi in(v-\alpha)/w}}{(e^{2\pi inv/w}-1)n^{1-s}} \right\}
			\nonumber \\
			& \qquad \qquad  + \frac{1}{\Gamma(s)(e^{2\pi is}-1)} \int_{C_R(\theta)}
			\frac{z^{s-1}e^{(v+w-\alpha)z}}{(e^{vz} -1)(e^{wz}-1)} dz, 	 \label{integral_CR}
		\end{align}
		where $ K = \lfloor|v|R/(2\pi)\rfloor $ and $ L = \lfloor|w|R/(2\pi)\rfloor  $.
		%From here, for $z\in \C$ and $ \varepsilon >0 $, let $ D(z, \varepsilon) $ be the closed disk whose center in $z$ with fadious $ \varepsilon $. We use the notation $x_+= \max\{x,0\}$ for $ x \in \R $. We first note that for any $ \varepsilon > 0 $, there exists $ M = M(\varepsilon)>0 $ such that for $ z \in \C \backslash \bigcup_{m \in \Z} D(2\pi im, \varepsilon) $ the inequality
		%\[
		%	\left| \frac1{e^z-1} \right| \leq M e^{-(\mathrm{Re}(z))_+}
		%\]
		%holds.
		Hence, for
		\[
		z \in \C \backslash \bigcup_{m \in \Z} 
		D(2\pi im/v, \varepsilon) \cup D(2\pi im/w, \varepsilon)
		\]
		we have
		\begin{align*}
			\qquad 
			\left| \frac{z^{s-1}e^{(v+w-\alpha)z}}{(e^{vz} -1)(e^{wz}-1)} \right|
			&= \left| \frac1{e^{vz}-1} \right| \left| \frac1{e^{wz}-1} \right|	|e^{(v+w-\alpha)z}||z^{s-1}|	\\
			&\leq M_1 e^{-(\mathrm{Re}(vz))_+} \cdot M_2 e^{-(\mathrm{Re}(wz))_+}
			e^{\mathrm{Re}((v+w-\alpha)z)}|z|^{\sigma-1}	\\
			&= M_1M_2 \exp{\{\mathrm{Re}((v+w-\alpha)z)-\mathrm{Re}(vz)_+-\mathrm{Re}(wz)_+\}}
			|z|^{\sigma-1}
		\end{align*}
		with a certain $ M_1=M_1(\varepsilon, v)>0 $ and $ M_2=M_2(\varepsilon, w)>0 $.
		Here, let $ 0 \leqq y_1, y_2 < 1 $ and, put
		\[
		\alpha = \alpha(y_1, y_2) = v(1-y_1)+w(1-y_2) \in H(\theta)
		\]
		and $ z' = z/|z| $. Then we see that there exists $ T=T(\varepsilon) \geq 0 $ such that, for
		\begin{align*}
			& \mathrm{Re}((v+w-\alpha)z')- \mathrm{Re}(vz')_+ - \mathrm{Re}(wz')_+	\\
			&\qquad = \mathrm{Re}((vy_1+wy_2)z')- \mathrm{Re}(vz')_+ - \mathrm{Re}(wz')_+	\\
			&\qquad \leq
			\mathrm{Re}(vz') y_1 + \mathrm{Re}(wz') y_2
			- \mathrm{Re}(vz')_+ - \mathrm{Re}(wz')_+	\\
			&\qquad \leq 
			-T,
		\end{align*}
		and so
		\[
		\mathrm{Re}((v+w-\alpha)z) - \mathrm{Re}(vz)_+ - \mathrm{Re}(wz)_+ \leq -T|z|.
		\]
		Hence we see that for all $ z \in \C - \bigcup_{m \in \Z} 
		D(2\pi im/v, \varepsilon) \cup D(2\pi im/w, \varepsilon) $,
		\begin{align}\label{ineq(1)}
			\qquad 
			\left| \frac{z^{s-1}e^{(v+w-\alpha)z}}{(e^{vz} -1)(e^{wz}-1)} \right|
			\leq
			M_1M_2 \cdot |z^{s-1}|e^{-T|z|} 
		\end{align}
		If $ 0 < y_1, y_2 < 1 $ or $ y_1 = y_2 = 0 $, then we can choose $T>0$. In fact, since $v,w$ are linear independent over $\Q$, for any $z'$ with $ |z'|=1 $ at least one 
		of $ \mathrm{Re}(vz') \neq 0 $ and $ \mathrm{Re}(wz') \neq 0 $ holds.
		If $ 0 < y_1 < 1 $ and $ y_2 = 0 $, that we can choose $T>0$. In fact, 
		\begin{align*}
			& \mathrm{Re}((vy_1+wy_2)z')- \mathrm{Re}(vz')_+ - \mathrm{Re}(wz')_+ \\ 
			&\qquad \qquad  = \mathrm{Re}(vz') y_1 - \mathrm{Re}(vz')_+ - \mathrm{Re}(wz')_+ < 0
		\end{align*}
		Similarly, in the case of $ y_1 = 0 $ and $ 0 < y_2 < 1 $, we can choose $T>0$.
		
		From (\ref{ineq(1)}), we see that the integral term on the rightmost side of 
		(\ref{integral_CR}) converges to 0 when the radius of the contour goes to infinity if $ 0<y_1,y_2<1 $ and $ 0<y_1<1, y_2=0 $ and $ y_1=0, 0<y_2<1 $
		or $ y_1=y_2=0 $ with $ \mathrm{Re}(s)<0 $. Namely, 
		\begin{align*}
			\left|
			\int_{C_R(\theta)}
			\frac{z^{s-1}e^{(v+w-\alpha)z}}{(e^{vz} -1)(e^{wz}-1)} dz
			\right|
			&\leq
			\int_{|z|=R}
			\left| \frac{z^{s-1}e^{(v+w-\alpha)z}}{(e^{vz} -1)(e^{wz}-1)} \right|
			|dz|	\\
			&\leq
			\int_{|z|=R}
			M_1M_2 |z^{s-1}| e^{-T|z|} |dz|  
			\rightarrow 0
			\quad (R \rightarrow \infty)
		\end{align*}
		Hence we can calculate the integral by counting all the residues on the whole space. Since, by the assumption the poles of the integrand are all simple except at the origin, we obtain
		\begin{eqnarray}
			&& \zeta_2(s, \alpha(y_1,y_2); v,w)		\nonumber \\
			&& \ \ =- \frac{\Gamma(1-s)}{(2\pi i)^{1-s}e^{\pi is}}\left\{ \frac{1}{v^s} 
			\mathop{\sum_{n=-\infty}}\limits_{n \neq 0}^\infty \frac{e^{2\pi in(vy_1+wy_2)/v}}
			{(e^{2\pi inw/v}-1)n^{1-s}} + \frac{1}{w^s}
			\mathop{\sum_{n=-\infty}}\limits_{n \neq 0}^\infty \frac{e^{2\pi in(vy_1+wy_2)/w}}
			{(e^{2\pi inv/w}-1)n^{1-s}} \right\}. \nonumber \\
			\label{Thm1(i)}
		\end{eqnarray}
		Also, the two infinite series on the right-hand side converge absolutely and locally uniformly in $s \in \mathbb{C}$ if $(y_1,y_2)=(0,0)$, and on the region $\mathrm{Re}(s)<0$ if $ y_1 =0 $ or $ y_2=0 $, by Lemma \ref{lem_2nd}.
    			
	(ii) In the case when $ v, w $ are linearly dependent over $ \Q $, i.e., there exist $ p, q \in \N $ such that $ pv = qw $ and $ (p, q) = 1 $. Then, $ F(z) $ has simple poles at	
		\[
		z = \frac{2\pi in}{v} \ \, (n \in \Z \setminus \{ 0 \},\ q \mid \hspace{-.65em}/  n) ,\ \
		\frac{2\pi in}{w} \ \, (n \in \Z \setminus \{ 0 \},\ p \mid \hspace{-.65em}/  n) .
		\]
		Moreover, the residues of $ F(z) $ at its simple poles are
		\begin{align*}
			\mathop{\mathrm{Res}}\limits_{z = 2\pi in/v} F(z) 
			= \frac{1}{v} \left(\frac{2\pi in}{v} \right)^{s-1}
			\frac{e^{(w - \alpha)2\pi in/v}}{e^{2\pi inw/v}-1}, \quad
			\mathop{\mathrm{Res}}\limits_{z = 2\pi in/w} F(z) 
			= \frac{1}{w} \left(\frac{2\pi in}{w} \right)^{s-1}
			\frac{e^{(v - \alpha)2\pi in/w}}{e^{2\pi inv/w}-1}
		\end{align*}
		On the other hand, for $ w = pv/q $,
		\begin{align*}
			& \lim_{z \rightarrow 2q\pi in/v} \frac{d}{dz} 
			\left\{ \left( z -\frac{2q\pi in}{v} \right)^2 
			\frac{z^{s-1}e^{(v+pv/q-\alpha)z}}{(e^{vz}-1)(e^{pvz/q}-1)} \right\}	\\
			& = \lim_{z_0 \rightarrow 0} \frac{d}{dz_0} 
			\left\{ z_0^2
			\left(z_0 + \frac{2q\pi in}{v}\right)^{s-1} 
			\frac{e^{(v+pv/q-\alpha)z_0}e^{-2q\pi in\alpha/v}}
			{(e^{vz_0}-1)(e^{pvz_0/q}-1)}	\right\}	\\
			& = \lim_{z_0 \rightarrow 0} \frac{d}{dz_0} 
			\Bigg\{z_0^2
			\left(z_0 + \frac{2q\pi in}{v}\right)^{s-1}	\\
			&  \ \  \qquad \qquad \qquad   \times \frac{e^{(v+pv/q-\alpha)z_0}e^{-2q\pi in\alpha/v}}
			{\left( vz_0 + \frac{v^2}{2!}z_0^2 + O(z_0^3) \right)
				\left( \frac{pv}{q}z_0 + \frac{1}{2!}\left(\frac{pv}{q}\right)^2 z_0^2 + O(z_0^3) \right)}
			\Bigg\}	\\
			& = \frac{q}{pv^2} (s-1) \left( \frac{2q\pi in}v \right)^{s-2} e^{-2q\pi in\alpha /v} \nonumber \\
			& \ \  \qquad \quad  - \left( \frac{\alpha q}{pv^2} - \frac{p+q}{pv} + \frac{vp}{2q} + \frac{v}2 \right) 
			\left( \frac{2q\pi in}v \right)^{s-1} e^{-2q\pi in\alpha /v}
		\end{align*}
		Therefore, $ F(z) $ has double poles at
		\[
		z = \frac{2\pi i qn}{v} = \frac{2\pi ipn}{w} \quad (n \in \Z \setminus \{ 0 \}).
		\]
		Next, we calculate the following residue sum:
		\begin{eqnarray*}
			&& \sum_{0 < |n| \leq M} \mathrm{Res} F(z)
			= \mathop{\sum_{0 < |n| \leq L}}\limits_{q \; \mid \hspace{-.25em}/ \, n}
			\mathop{\mathrm{Res}}\limits_{z = 2\pi i n/v} F(z)
			+ \mathop{\sum_{0 < |n| \leq K}}\limits_{p \; \mid \hspace{-.25em}/ \, n}
			\mathop{\mathrm{Res}}\limits_{z = 2\pi i n/w} F(z)	\\
			&& \qquad \qquad \qquad \qquad 
			+ \sum_{0 < |k| \leq K} \mathop{\mathrm{Res}}\limits_{z = 2\pi i qk/v} F(z)
		\end{eqnarray*}
		thus, we obtain
		\begin{eqnarray}
			&& \zeta_2(s, \alpha; v,w)		\nonumber \\
			&& \ \ =- \frac{\Gamma(1-s)}{(2\pi i)^{1-s}e^{\pi is}}
			\left\{ \frac{1}{v^s} 
			\mathop{\sum_{n=-\infty}}\limits_{q \; \mid \hspace{-.25em} / \, n}^\infty
			\frac{e^{2\pi in(w-\alpha)/v}}
			{(e^{2\pi inw/v}-1)n^{1-s}} + \frac{1}{w^s}
			\mathop{\sum_{n=-\infty}}\limits_{p \; \mid \hspace{-.25em}/ \, n}^\infty
			\frac{e^{2\pi in(v-\alpha)/w}}
			{(e^{2\pi inv/w}-1)n^{1-s}}  \right. \nonumber \\
			&& \quad \qquad \qquad \qquad \qquad
			+ \frac{q^{s-1}}{2\pi ipv^s}(1-s) \mathop{\sum_{n=-\infty}}\limits_{n \neq 0}^\infty \frac{e^{-2\pi inq\alpha/v}}{n^{2-s}} 
			\nonumber \\
			&& \quad \qquad \qquad \qquad \qquad	
			- \left. \left( \frac{\alpha q}{pv^2} - \frac{p+q}{pv} + \frac{vp}{2q} + \frac{v}2 \right) 
			\left( \frac{q}{v} \right)^{s-1} \mathop{\sum_{n=-\infty}}\limits_{n \neq 0}^\infty \frac{e^{-2\pi inq \alpha /v}}{n^{1-s}} 
			\vbox to 35pt{} \right\}. \nonumber 
		\end{eqnarray}
		Moreover, the first and second finite series of the right-hand side converge absolutely and uniformly on the whole space $ \C $ if $(y_1,y_2)=(0,0)$, and on the region $\mathrm{Re}(s)<0$ if $ y_1 =0 $ or $ y_2=0 $, by Lemma \ref{lem_3rd}.
		%here $ R_n $ was defined in the statement of (\ref{R_n}) in Theorem \ref{th:Main_Theorem1}.
		Furthermore, the sums in the third and fourth terms of the above are equal to
		\begin{align*}
			&\mathop{\sum_{n=-\infty}}\limits_{n \neq 0}^\infty 
			\frac{e^{-2\pi inq\alpha/v}}{n^{2-s}}
			=\zeta_L\left(2-s,1,-\frac{q\alpha}v\right)
			+ e^{\pi is}\zeta_L\left(2-s,1,\frac{q\alpha}v \right),	\\
			&\mathop{\sum_{n=-\infty}}\limits_{n \neq 0}^\infty
			\frac{e^{-2\pi inq \alpha /v}}{n^{1-s}}
			=\zeta_L\left(1-s,1,-\frac{q\alpha}v\right)
			+ e^{\pi is}\zeta_L\left(1-s,1,\frac{q\alpha}v\right)
		\end{align*}
		respectively.
	Hence the proof of Theorem \ref{th:Main_Theorem1} is complete.
	\end{proof}
	
	\medskip
	
\begin{proof}[Proof of Theorem \ref{th:convexity}]
Let
\[
B = \{ s \in \mathbb{C} \mid \sigma_1 \le \sigma \le \sigma_2,\ |t| \ge t_0 \}
\]
with $t_0>2$. We apply the Phragm\'en--Lindel\"of principle
to $g(s)=\zeta_2(s,\alpha;v,w)$ on the strip.
First, for $\sigma \ge 2+2\varepsilon$, the series defining
$\zeta_2(s,\alpha;v,w)$ converges absolutely, hence
\[
\zeta_2(s,\alpha;v,w) \ll 1.
\]

Next, we consider the region $\sigma<0$.

\medskip

\noindent
{\bf (i)} In the case $v/w \in \overline{\mathbb{Q}}\setminus \mathbb{Q}$.
By the functional equation (Theorem~\ref{th:Main_Theorem1}(i))
and Lemma~\ref{lem_2nd}, the series appearing on the right-hand
side are absolutely convergent for $\sigma<0$. Hence,
using Stirling's formula
\[
\Gamma(1-s) \ll |t|^{1/2-\sigma} e^{-\pi |t|/2},
\]
we obtain
\[
\zeta_2(s,\alpha;v,w)
\ll \left|\frac{\Gamma(1-s)}{(2\pi i)^{1-s} e^{\pi i s}}\right|
\ll |t|^{1/2-\sigma}
\qquad (\sigma<0).
\]

\medskip

\noindent
{\bf (ii)} In the case $v/w \in \mathbb{Q}$.
By the functional equation (Theorem~\ref{th:Main_Theorem1}(ii)),
the dominant contribution comes from the term involving
$\zeta_L(2-s,1,-q\alpha/v)$.
Since the Lerch zeta-function remains bounded in vertical strips
with $\Re(2-s)>1$, the growth is determined by the factor
$\Gamma(1-s)$.
Hence, using Stirling's formula, we obtain
\[
\zeta_2(s,\alpha;v,w)
\ll |t|^{3/2-\sigma}
\qquad (\sigma<0).
\]
Now we apply the Phragm\'en--Lindel\"of principle to the strip
$-2\varepsilon \le \sigma \le 2+2\varepsilon$.
For case (i), we take
$
a = 1/2 + 2\varepsilon, \ b = 0,
$
and obtain
\[
\zeta_2(\sigma+it,\alpha;v,w)
\ll |t|^{(2-\sigma)/4 + \varepsilon}.
\]
For case (ii), we take
$
a = 3/2 + 2\varepsilon, \ b = 0,
$
and obtain
\[
\zeta_2(\sigma+it,\alpha;v,w)
\ll |t|^{3(2-\sigma)/4 + \varepsilon}.
\]
This completes the proof.
\end{proof}

	\medskip
	
	\begin{proof}[Proof of Theorem \ref{Lindelof_zeta_2}]
	Assume that 
	\[
	\zeta_2(\sigma +it, \alpha; v,w) = O(|t|^{\varepsilon/2k})  \qquad 
	\left( \frac12 \leq \sigma \leq 2 \right)
	\]
	for any $ \varepsilon > 0 $ and $ k \in \N $. Then, 
	\[
	\int_1^T |\zeta_2(\sigma +it, \alpha; v,w)|^{2k} dt 
	\ll \int_1^T t^{\varepsilon} dt 
	= \frac1{1+\varepsilon} (T^{1+\varepsilon} - 1)
	\ll T^{1+\varepsilon}.
	\]
	Conversely, assume that
\[
\int_2^T |\zeta_2(\sigma+it,\alpha;v,w)|^{2k}dt
=O(T^{1+\varepsilon})
\]
holds for any $k\in\mathbb{N}$ and any $\varepsilon>0$.
Taking $\ell=2k$ in Lemma \ref{Lem:int}, we have
\begin{align*}
|\zeta_2(\sigma+it,\alpha;v,w)|^{2k}
&\ll (\log t)\int_{-\delta}^{\delta}
|\zeta_2(\sigma+i(t+x),\alpha;v,w)|^{2k}dx+t^{-A}.
\end{align*}
By the assumed mean-value estimate, the integral on the right-hand side is
\[
\int_{-\delta}^{\delta}
|\zeta_2(\sigma+i(t+x),\alpha;v,w)|^{2k}dx
\ll t^{1+\varepsilon}.
\]
Therefore,
\[
|\zeta_2(\sigma+it,\alpha;v,w)|^{2k}
\ll (\log t)t^{1+\varepsilon}+t^{-A}
\ll t^{1+2\varepsilon}.
\]
Hence
\[
|\zeta_2(\sigma+it,\alpha;v,w)|
\ll t^{1/2k+\varepsilon/k}.
\]
Since $k$ is arbitrary, this implies
\[
\zeta_2(\sigma+it,\alpha;v,w)=O(t^\varepsilon)
\]
for any $\varepsilon>0$.
	\end{proof}
    
%%%%%%%%%%%%%%%%%%%%%%%%%%%%%%%%%%%%%%%%%%%%%%%%%%%%%%

\medskip

\section*{Acknowledgments}
First of all, I would like to express my deepest gratitude to my academic advisor
Prof. Kohji Matsumoto for his valuable advice and guidance. I also sincerely thank Prof. Takashi Nakamura and Prof. Hirotaka Akatsuka
for their valuable advice and for many useful conversations.
I would like to thank all members of the same laboratory and all members of the Kansai Multiple Zeta Study Group.

%%%%%%%%%%%%%%%%%%%%%%%%%%%%%%
\bibliographystyle{amsalpha}
\bibliography{References} 

\end{document}